\def\norm[#1]{\left\lVert#1\right\rVert}
\newtheorem{thm}{Theorem}
\newtheorem{lem}[thm]{Lemma}
\newtheorem{definition}[thm]{Definition}
\newtheorem{cor}[thm]{Corollary}
\newcommand{\SetR}{\mathbb{R}}
\newcommand{\Expect}{\mathbb{E}}
\newcommand{\Variance}{\mathbb{V}}
\title{Accelerating Random Kaczmarz Algorithm Based on Clustering Information}
\author{Yujun Li \and Kaichun Mo \and Haishan Ye \\
Department of Computer Science and Engineering\\
Shanghai Jiao Tong University \\
\{liyujun145,yhs12354123\}@gmail.com, daerduomkch@sjtu.edu.cn
}
\date{}
\begin{document}
%
\maketitle
\begin{abstract}

Kaczmarz algorithm is an efficient iterative algorithm to solve overdetermined consistent system of linear equations. 
During each updating step, Kaczmarz chooses a hyperplane based on an individual equation    
and projects the current estimate for the exact solution onto that space to get a new estimate.
Many vairants of Kaczmarz algorithms are proposed on how to choose better hyperplanes.
Using the property of randomly sampled data in high-dimensional space,
we propose an accelerated algorithm based on clustering information to improve block Kaczmarz and Kaczmarz via Johnson-Lindenstrauss lemma.
Additionally, we theoretically demonstrate convergence improvement on block Kaczmarz algorithm.

\end{abstract}

\section{Introduction}

In many real applications, we will face with solving the overdetermined consistent system of linear equations of the form $Ax = b$, 
where $A\in \SetR^{n\times p}$ and $b\in\SetR^{n}$ are given data, and $x\in\SetR^p$ is unknown vector to be estimated.
If $A$ has a small size, we could directly solve the problem by computing pseudo-inverse, $x = A^\dagger b$.
However, if $A$ is of large size, either we cannot store it in the main memory or it is extremely time-consuming to compute the pseudo-inverse.
Fortunately, in such cases, the Kaczmarz algorithm can be used to solve the problem, since we can update our current estimate to the exact solution $x_*$ iteratively by only using a small fraction of entire data each time.

Recently, many variants of the classical Kaczmarz algorithm \cite{kaczmarz1937angenaherte} are proposed by researchers. 
Classical Kaczmarz algorithm performs each iterative step by selecting rows of A in a sequential order. 
Despite the power of it, theoretical guarantee for its rate of convergence is scarce. 
However, a randomized version of Kaczmarz algorithm \cite{strohmer2009randomized}, denoted as RKA in this paper,
yields provably exponential rate of convergence in expectation by sampling rows of $A$ at random, 
with probability proportional to Euclidean norm. Recently,
more accelerated variants of Kaczmarz algorithms are put forth by researchers. 
With the usage of well-known Johnson-Lindenstrauss lemma \cite{johnson1984extensions}, 
Eldar and Needell proposed RKA-JL algorithm \cite{eldar2011acceleration}, which selects the optimal 
update from a randomly chosen set of linear equations during each iteration. 
Empirical studies demonstrate an improved rate of convergence than former methods. 
Moreover, along another direction of researches on accelerating Kaczmarz algorithm, 
multiple rows of A are utilized at the same time to perform one single updating step. 
These RKA-Block methods \cite{elfving1980block,needell2014paved,briskman2014block} use subsets of 
constraints during iterations so that an expected linear rate of convergence can
be provably achieved if utilizing randomization as well. However, geometric properties of blocks are important. 
Blocks with bad-condition number will interfere the rate of convergence. 

Since Kaczmarz based algorithms are widely used nowadays \cite{natterer1986mathematics,sezan1987incorporation,dai2014randomized,thoppe2014stochastic}, 
many other branches of research on Kaczmarz are developed. 
In terms of inconsistent case, many useful methods \cite{needell2010randomized,zouzias2013randomized} are proposed.
When $A$ is a large sparse matrix, there will appear a big fill-in, modified Kaczmarz via clustering Jaccard and Hamming distances can overcome this problem \cite{pomparuau2014supplementary,pomparuau2013acceleration}.
However in this paper, we consider solving consistent system of linear equations with high-dimensional matrix $A$ following Gaussian distribution.

One greedy idea highlighted by \cite{eldar2011acceleration} emphasize the importance of choosing hyperplanes 
that give the furthest distance to the current estimate during updating steps. 
Many methods such as \cite{eldar2011acceleration} are proposed to approximately utilize this idea by considering acceptable number of hyperplanes and selecting the furthest one. 
In this paper, we utilize clustering methods to gain more insight on where is the furthest hyperplane. 
Eventually, we incorporated clustering algorithms into one version of randomized Kaczmarz algorithm(RKA-JL) 
and block Kaczmarz algorithm(RKA-Block) in order to better approximate that optimal plane.

It is well-known that high-dimensioanl data points randomly sampled according to Gaussian distribution 
are much likely be orthogonal to each other. One can refer to \cite{hopcroft2015foundation} and Theorem \ref{thm:orthogonal-in-high-dimension} for details. 
Even though real data entities in high dimension are not ideally sampled from Gaussian distribution, 
they still tend to stretch along with different axes. In this paper, we cluster rows of $A$ into 
different classes so that the center points of clusters tend to be orthogonal.
After capturing the clustering information, we may first measure the distances between 
the centroids of clusters to the current estimate within a tolerable amount of time to
gain knowledge about distances from the current estimate to all hyperplanes determined by rows of $A$. 
Then, we may further select one sample from the furthest cluster to approximate the unknown furthest hyperplane.

The paper offers the following contributions: 

\begin{itemize}
 \item After applying clustering method and utilizing clustering information, we improve RKA-JL and RKA-Block algorithms to speedup their convergences. The empirical experiments show the improvement clearly.
 \item Theoretically, we prove that clustering method could improve convergence of RKA-Block algorithm. In addition, we coarsely analyze the modified RKA-JL (RKA- Cluster-JL) in terms of runtime. 
\end{itemize}

The remainder of the paper are organized as follows. 
In section 2, we give a short overview of two relevant algorithms proposed in recent years, which should be helpful in understanding our algorithms. 
Section 3 shows how to use clustering method to improve RKA-JL algorithm and RKA-Block algorithm. 
We prove theoretically that clustering method could improve convergence of RKA-Block algorithm.
In section 4, we conduct some numerical experiments to show the improved performance of our algorithms. 
Section 5 concludes the paper briefly.

\section{Related Work}

Consider the overdetermined consistent linear equation system $Ax = b$,
where $A \in \SetR^{n\times p}$, $b\in\SetR^{n}$ and $x\in\SetR^{p}$.
Denote $x_0$ the initial guess of $x$, $A_1, A_2, ..., A_n$ the rows of data matrix $A$ and $b_1, b_2, ..., b_n$ the components in $b$. Besides, let 
$x_*$ denotes the optimal value such that it satisfies $Ax_* = b$.

In classical Kaczmarz algorithm \cite{kaczmarz1937angenaherte}, rows are iteratively picked in a sequential order. Denote $x_k$ to be the estimate of $x_*$ after $k$ iterations. At the $k$th updating step, we first pick one hyperplane $A_ix=b_i$ and then $x_{k}$ can be calculated by the former estimate $x_{k-1}$ and the picked hyerplane as follows. 
\begin{equation}
\label{eq:1}
x_{k} = x_{k-1} + \frac{b_i - \langle A_i, x_{k-1} \rangle}{\norm[A_i]_2^2}A_i
\end{equation}

Our work mainly relies on RKA-JL algorithm \cite{eldar2011acceleration} and RKA-Block algorithm \cite{needell2014paved}.
We will review these algorithms below.

\subsection{RKA-JL}

During each updating step, Kaczmarz algorithm chooses a hyperplane to project on. Thus, it is a combinatoric problem if we want to select out the optimal sequence of projecting hyperplanes and achieve the fastest convergence to the exact solution $x_*$. Unfortunately, such problem is too complicated to be solvable in reasonable amount of time. 
But, some greedy algorithm whose running time is affordable may be used to approximate the optimal sequence of hyperplanes and achieve a quite excellent convergence rate. 

Before introducing a greedy algorithm proposed by \cite{eldar2011acceleration}, we have to note a key property shared by all Kaczmarz related algorithms. Recall that Kaczmarz algorithm iteratively update the current estimate to a new one by projecting on to one hyperplane. 
It is easy to observe that the Euclidean distance between the current estimate and the solution is monotonically decreased, which means that
$$\norm[x_{k+1} - x_*]_2 \leq \norm[x_k - x_*]_2$$
This is obvious because
\begin{equation*}
\begin{aligned}
\norm[x_{k+1} - x_*]_2^2 &= \norm[x_k - x_*]_2^2 -\norm[x_{k+1} - x_k]_2^2 \\
						 &\leq \norm[x_k - x_*]_2^2
\end{aligned}
\end{equation*}
Thus, if we can find a way to maximize $\norm[x_{k+1} - x_k]_2$ at each iteration, we may achieve better convergence rate at last.

The exact greedy idea has been highlighted in \cite{eldar2011acceleration} when they proposed RKA-JL algorithm. The idea is quite simple but reasonable.
At the $k$-th updating step, we can choose the hyperplane $A_ix=b_i$ to project on where $A_i = \arg\max_{A_i} \norm[x_{k} - x_{k-1}]_2$ and update the estimate as Eq(\ref{eq:1}). 
But, after thinking of the practical issue, we may find that it is unaffordable to sweep through all rows of $A$ to pick the best one at each iteration. \
Thus, \cite{eldar2011acceleration} proposed a procedure to approximate the best hyperplane before performing each updating step, which is the key component of RKA-JL algorithm.

Instead of sweeping through the whole data and comparing the update $\norm[x_{k+1} - x_k]_2$, 
RKA-JL algorithm \cite{eldar2011acceleration} selects $p$ rows from $A$ with probability $\norm[A_i]_2^2 / \norm[A]^2_F$, 
utilizes Johnson-Lindenstrauss lemma to approximate the distance $\norm[x_{k+1} - x_k]_2 = \frac{|b_i - \langle A_i, x \rangle|}{\norm[A_i]_2}$ 
and then choose the maximized one as the row to determine the projecting hyperplane. 
Besides, a testing step is launched to ensure that the chosen hyperplane will not be worse than that of classical.

In the initialization step, the multiplication of $A$ and $\Phi$ costs $O(npd)$ time. 
And it takes $O(np)$ time to compute the sampling probability, which only needs to be computed once
and coule be used in the following selection steps during each updating step.
The selection step costs $O(pd)$ time,
while the testing and updating step clearly only	 cost $O(p)$ time.
Therefore each iteration costs $O(pd)$ time. 
The algorithm converge (in expectation) in $O(p)$ iterations, then the overall runtime is $O(npd + dp^2)$.


The whole process is given in Algorithm \ref{alg:RKA-JL}.
%
%
%

\begin{algorithm}[ht]
 \caption{RKA-JL}
 \label{alg:RKA-JL}
 \begin{algorithmic}[1]
  \State \textbf{Input:} $A \in \SetR^{n\times p}$, $b \in \SetR^{n}$. Solve consistent linear equation system $Ax = b$, where $x \in \SetR^{p}$.
  \State \textbf{Initialization Step:} Create a $d\times n$ Gaussian matrix $\Phi$ and set $\alpha_i = \Phi A_i$. Initialize $x_0$. Set $k = 0$.
  \State \textbf{Selection Step:}Select $p$ rows so that each row $A_i$ is chosen with probability $\norm[A_i]_2^2 / \norm[A]_F^2$.
					For each row selected, calculate
					\[ \gamma_i = \frac{|b_i - \langle \alpha_i, \Phi x_k \rangle|}{\norm[\alpha_i]_2} \] 
					and set $j = \arg\max_i \gamma_i$.
					
	\State \textbf{Test Step:} Select the first row $A_l$ out of the $n$, explicitly calculate 
					\begin{align*}
					 \hat{\gamma_j} = \frac{|b_j - \langle A_j, x_k \rangle|}{\norm[A_j]_2}  \quad \textrm{and} \quad 
					 \hat{\gamma_l} = \frac{|b_l - \langle A_l, x_k \rangle|}{\norm[A_l]_2}
					\end{align*}
					If $\hat{\gamma_l} > \hat{\gamma_j}$, set $j = l$.
	\State \textbf{Update Step:} Set 
					\[ x_{k+1} = x_k + \frac{b_j - \langle A_j, x_k \rangle}{\norm[A_j]_2^2} A_j \]
	\State 	Set $k = k+1$. Go to step 3 until convergence.
 \end{algorithmic}
\end{algorithm}

In fact, RKA-JL algorithm is only a weak approximation to the greedy idea mentioned above,
since it only takes into consideration a small fraction of data each time.
It is obvious that the greater value of $d$ and row selection number will give greater improvements,
but at the expense of greater computation cost at each iteration.
There is a trade-off between improvement and computational expense.
However, after utilizing clustering methods, more data can be used in selecting the best hyperplane 
while keeping the computational cost in an acceptable amount. 
We will illustrate our modified RKA-Cluster-JL algorithm in section 3.1.

\subsection{RKA-Block}

%
%
Another direction of researches related with Kaczmarz lies on the utilization of multiple rows of $A$ to update at each updating step.
In \cite{elfving1980block,needell2014paved} block versions of Kaczmarz algorithm are proposed. Instead of just using one hyperplane at each updating step, block Kaczmarz uses multiple hyperplanes. To be specific, when updating $x_k$, we may project the old estimate $x_{k-1}$ using $A_\tau$ which is a submatrix in $A$ and its corresponding $b_\tau$ via $x_k = x_{k-1}+(A_\tau)^\dagger (b_\tau-A_\tau x_{k-1})$.

\begin{algorithm}[ht]
 \caption{RKA-Block}
 \label{alg:RKA-Block}
 \begin{algorithmic}[1]
  \State \textbf{Input:} $A \in \SetR^{n\times p}$, $b \in \SetR^{n}$. Solve consistent linear equation system $Ax = b$, where $x \in \SetR^{p}$. 
  \State \textbf{Initialization Step:} Initialize $x_0$. Set $k=0$.
  \State \textbf{Selection Step:} Uniformly choose some $\tau$ rows to construct a matrix block $A_\tau$.
	\State \textbf{Update Step:} Set 
					\[ x_{k+1} = x_k + (A_\tau)^\dagger(b_\tau - A_\tau x_k) \]
	\State Set $k = k+1$. Go to step 3 until convergence.
 \end{algorithmic}
\end{algorithm}

Block Kaczmarz algorithm selects several hyperplanes and project $x_k$ onto the intersection of several hyperplanes.
This procedure acts exactly the same as projecting the current estimate in hyperplanes iteratively until convergence.
However, only one updating step is required to achieve that, while iteratively bouncing between these hyperplanes takes time. 
See Algorithm 2 for details.

While block Kaczmarz provably expected linear rate of convergence, 
it remains a problem on how to choose rows to construct blocks so that they are well-conditioned. 
We will show in section 3.2 that utilizing clustering information helps a lot.
Besides, theoretical guarantee is given to demonstrate the acceleration of our modified RKA-Cluster-Block algorithm.

\section{Methodology and Theoretical Analysis}
In this section we will introduce our accelerated algorithms and give some theoretical analysis.
Our observation is that high-dimensional Gaussian distributed data tends to stretch in nearly orthogonal clusters.
See theorem below for Theoretical proof.

\begin{restatable}{thm}{thmorthogonal}
 \label{thm:orthogonal-in-high-dimension}
 $u$ and $v$ are two vectors in $\SetR^{d}$. 
 Suppose each entry of $u$ and $v$ are sampled from Gaussian distribution $\mathcal{N}(0, \sigma^2)$.
 Then one has the probability inequality
 \[ P (\frac{|u^Tv|}{ \norm[u]_2 \norm[v]_2} \leq \epsilon )\ge \left(1-\frac{1}{\epsilon^2(1-\delta)^4d}\right) \left(1-6e^{-c\delta^2d}\right) \]
 where $\delta \in [0, 1]$ and $c$ is a fixed constant.
\end{restatable}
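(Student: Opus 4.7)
The plan is to prove the near-orthogonality bound by controlling the numerator $|u^Tv|$ and the denominator $\|u\|_2\|v\|_2$ of the cosine separately: Chebyshev's inequality on the inner product will yield the first factor, and chi-squared tail bounds on $\|u\|_2$ and $\|v\|_2$ will yield the second factor.

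For the numerator, independence of the coordinates gives $\mathbb{E}[u^Tv]=0$ and $\mathrm{Var}(u^Tv)=\sum_{i=1}^{d} \mathrm{Var}(u_iv_i)=d\sigma^4$, so Chebyshev at the threshold $\epsilon(1-\delta)^2 d\sigma^2$ yields
\[
P\!\left(|u^Tv|>\epsilon(1-\delta)^2 d\sigma^2\right)\le \frac{d\sigma^4}{\epsilon^2(1-\delta)^4 d^2\sigma^4}=\frac{1}{\epsilon^2(1-\delta)^4 d}.
\]
This threshold is calibrated precisely against the denominator: since $\|u\|_2^2/\sigma^2\sim\chi^2_d$, a Laurent--Massart-style concentration bound gives $P(\|u\|_2^2\le(1-\delta)^2 d\sigma^2)\le 3e^{-c\delta^2 d}$ for a fixed constant $c>0$, and identically for $v$; a union bound then yields $P(\|u\|_2\ge(1-\delta)\sqrt{d}\,\sigma \text{ and } \|v\|_2\ge(1-\delta)\sqrt{d}\,\sigma)\ge 1-6e^{-c\delta^2 d}$. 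On this event $\|u\|_2\|v\|_2\ge (1-\delta)^2 d\sigma^2$, so in combination with the Chebyshev bound the cosine is at most $\epsilon$.

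The main obstacle is assembling the two separate estimates into the multiplicative form $(1-a)(1-b)$ rather than the weaker union-bound form $1-a-b$ that Bonferroni would deliver. The cleanest route I see exploits the classical decoupling of length and direction for a standard Gaussian: after conditioning on $u$ and decomposing $v=\langle v,\hat u\rangle\hat u+v_\perp$ with $\hat u=u/\|u\|_2$, the scalar coefficient $\langle v,\hat u\rangle\sim\mathcal{N}(0,\sigma^2)$ and the residual squared length $\|v_\perp\|_2^2\sim\sigma^2\chi^2_{d-1}$ are genuinely independent. Applying Chebyshev to the former and the chi-squared bound to the latter, the two high-probability estimates act on independent ingredients and therefore multiply cleanly; the resulting conditional lower bound on $P(|u^Tv|/(\|u\|_2\|v\|_2)\le\epsilon\mid u)$ is free of $u$, so it holds unconditionally and matches the product form stated in the theorem.
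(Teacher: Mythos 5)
Your argument is correct, and your first paragraph is essentially the paper's own proof: the paper likewise applies Chebyshev to $u^Tv$ using $\Expect(u^Tv)=0$ and $\Variance(u^Tv)=d\sigma^4$, and controls $\norm[u]_2$ and $\norm[v]_2$ via a Gaussian Annulus Theorem (your Laurent--Massart chi-squared tail plays the identical role) together with a union bound over the two norms. Where you genuinely diverge is at the obstacle you correctly identify, namely producing the multiplicative form $(1-a)(1-b)$ rather than the Bonferroni form $1-a-b$. The paper obtains the product by writing $P(\cdot)\ge P\bigl(\cdot\mid \overline{E_u}\cap\overline{E_v}\bigr)\,P\bigl(\overline{E_u}\cap\overline{E_v}\bigr)$ and then, inside the conditional probability, replacing $\norm[u]_2\norm[v]_2$ by its lower bound $\sigma^2(1-\delta)^2d$ and silently dropping the conditioning before invoking Chebyshev; that last step tacitly assumes $P(A\mid B)\le P(A)$ for the events in question, which is not justified since $u^Tv$ is not independent of the norms. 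Your decomposition $v=\langle v,\hat u\rangle\hat u+v_\perp$ repairs exactly this point: $\langle v,\hat u\rangle\sim\mathcal{N}(0,\sigma^2)$ and $\norm[v_\perp]_2^2\sim\sigma^2\chi^2_{d-1}$ are genuinely independent, the cosine $|\langle v,\hat u\rangle|/\norm[v]_2$ involves $\norm[u]_2$ not at all, and the two tail events therefore multiply honestly. Taking the Chebyshev threshold $\epsilon(1-\delta)^2\sqrt{d}\,\sigma$ for $|\langle v,\hat u\rangle|$ and the event $\norm[v_\perp]_2\ge(1-\delta)^2\sqrt{d}\,\sigma$ reproduces the first factor exactly and yields a second factor of the form $1-3e^{-c'\delta^2(d-1)}$, which dominates $1-6e^{-c\delta^2 d}$ for a suitable fixed constant, so your route proves a marginally stronger statement. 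In short, the skeleton is the same (Chebyshev on the numerator, norm concentration on the denominator), but your independence argument is what actually licenses the product form that the paper merely asserts; the price is that you must carry the $d-1$ degrees of freedom and the slightly recalibrated thresholds through the final bookkeeping.
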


\begin{proof}(Sketch)
 Denote $E_u$ the event that $u$ holds inequality $\sigma(1-\delta) \sqrt{d} \leq \norm[u]_2 \leq \sigma(1+\delta)\sqrt{d}$.
 Via Bayes' theorem we have $P(|u^Tv| \leq \epsilon \norm[u]_2 \norm[v]_2) = P(|u^Tv| \leq \epsilon \norm[u]_2 \norm[v]_2 | \overline{E_u}\cap \overline{E_v})
 P(\overline{E_u}\cap \overline{E_v})$. 
 We apply Chebyshev's inequality to bound the probability $P(|u^Tv| \leq \epsilon \norm[u]_2 \norm[v]_2 | \overline{E_u}\cap \overline{E_v})$.
 Together with bound on the probability $P(\overline{E_u}\cap \overline{E_v})$, we finish the proof.
 See appendix \ref{app:orthogonal-in-high-dimension} for details.		
\end{proof}

Therefore, two randomly chosen items from a high-dimensional Gaussian distributed data have a high probability to be perpendicular.
Even though real data entities in high dimension are not ideally sampled from Gaussian distribution, 
they still tend to stretch along with different axes \cite{hopcroft2015foundation}. 
Thus, they can be grouped into different clusters where distances to each other are quite large.
Moreover, if the data actually can be embedded in a small dimensional space, 
the number of clusters should be close to the rank of the space, and thus quite small. 
Thus, if we can measure the property of these clusters within affordable amount of time, 
we can quickly master some knowledge on the entire data set.


Base on this observation, we give two accelerating algorithms via taking advantage of clustering.
The clustering algorithm needs not to be specific.
Any clustering algorithm with runtime no more than $O(npd+dp^2)$ is acceptable.
In practice, we use a k-means clustering method\cite{king2012online}.
Clustering $A$'s rows into $k$ cluster will cost $O(knp)$. 
In experiments, $k = 4$. It can be absorbed in the order of RKA-JL's computing operation numbers.
Therefore the runtime on clustering is acceptable.

\subsection{RKA-Cluster-JL}

To perform each projection, Kaczmarz select one hyperplane $A_ix=b_i$ to be projected on. We should notice that the hyperplane is uniquely determined by normal vector $A_i/\norm[A_i]_2$. To see this, recall Eq(\ref{eq:1}).
If we scale $A_i$ and $b_i$ by multiplying a constant $c$, it will not change the updating result.
Besides, since all hyperplanes go through the point $x_*$, we can uniquely calculate out $b_i=A_ix_*$. Thus, it is enough to only consider the normal vectors of rows of $A$ with unit length when choosing hyperplane to be projected on.

In this section, we will utilize clustering method to improve RKA-JL algorithm proposed by \cite{eldar2011acceleration}.
We conduct a clustering algorithm on $A$ to cluster the rows into $k$ clusters, each of which has a representive vector or cluster center $A_{c_j}$, $j\in\{1,2,...,k\}$.
At each iteration, we first choose the cluster representive vector $A_{c_j}$ which gives the maximized update $\norm[x_{k+1} - x_k]_2$.
Then in the $j$th cluster we randomly choose $p$ rows with probability proportional to $\norm[A_i]_2^2 / \norm[A]^2_F$.
This procedure could help us on better approximating the furthest hyperplane since good planes are more likely to be in the furthest cluster. 
The whole process is detailedly stated in Algorithm \ref{alg:RKA-Cluster-JL}.

\begin{algorithm}[ht]
 \caption{RKA-Cluster-JL}
 \label{alg:RKA-Cluster-JL}
 \begin{algorithmic}[1]
  \State \textbf{Input:} $A \in \SetR^{n\times p}$, $b \in \SetR^{n}$. 
  \State \textbf{Output:} Solve consistent linear equation system $Ax = b$, where $x \in \SetR^{p}$. 
  \State \textbf{Initialization Step:} Create a $d\times p$ Gaussian matrix $\Phi$ and set $\alpha_i = \Phi A_i$.
						Conduct a clustering algorithm in the rows of $A$, resulting in $c$ clusters with representive points $A_{\mathbf{c}_l}$, $l=\{1,2,...,c\}$.
						Initialize $x_0$. Set $k=0$.
  \State \textbf{Selection Step:} 
					Calculate 
					\[ \hat{x_k} = \Phi x_k \]
					For each representive point, calculate
					\[ r_l = \frac{|b_i - \langle A_{\mathbf{c}_l}, x_k \rangle |}{\norm[A_{\mathbf{c}_l}]_2} \]
					and set $t = \arg\max_l r_l$. \newline
					Select $p$ rows so that each row $A_i$ is chosen with probability $\norm[A_i]_2^2 / \norm[A]_F^2$ in the $t$th cluster.
					For each row selected, calculate
					\[ \gamma_i = \frac{|b_i - \langle \alpha_i, \Phi x_k \rangle|}{\norm[\alpha_i]_2} \] 
					and set $j = \arg\max_i \gamma_i$.
					
	\State \textbf{Test Step:} Select the first row $a_l$ out of the $n$, explicitly calculate 
					\begin{align*}
					 \hat{\gamma_j} = \frac{|b_j - \langle a_j, x_k \rangle|}{\norm[a_j]_2}  \quad \textrm{and} \quad 
					 \hat{\gamma_l} = \frac{|b_l - \langle a_l, x_k \rangle|}{\norm[a_l]_2}
					\end{align*}
					If $\hat{\gamma_l} > \hat{\gamma_j}$, set $j = l$.
	\State \textbf{Update Step:} Set 
					\[ x_{k+1} = x_k + \frac{b_j - \langle a_j, x_k \rangle}{\norm[a_j]_2^2} a_j \]
	\State 	Set $k \leftarrow k+1$. Go to step 4 until convergence.
 \end{algorithmic}
\end{algorithm}

The most difference between Algorithm \ref{alg:RKA-JL} and Algorithm \ref{alg:RKA-Cluster-JL} is that
Algorithm \ref{alg:RKA-Cluster-JL} chooses the best cluster representive point first, and then 
process as the same in Algorithm \ref{alg:RKA-JL}.

Ideally, if the furthest hyperplane to the current estimate lies in the furthest cluster, 
Algorithm \ref{alg:RKA-Cluster-JL} doesn't spend time to consider data points in other clusters at all,
while Algorithm \ref{alg:RKA-JL} still does that. 
If the total budget for sampled rows each time is fixed to $s$, 
Algorithm \ref{alg:RKA-Cluster-JL} spends $(s-c)$ of them searching in the right cluster, 
while Algorithm \ref{alg:RKA-JL} only spends $s/c$, which drops the probability
of it to find better hyperplane compared with Algorithm \ref{alg:RKA-Cluster-JL}.

To theoretically analysize our algorithm, we propose the following proposition. 

\begin{restatable}{prop}{propJL}
\label{prop:cluster-accelerate-1}
 $A \in \SetR^{n\times p}$ is a row-normalized matrix, whose rows have unit length.
 Suppose the row vectors of $A$ are uniformly distributed in the high-dimensional space.
 Cluster these row vectors by directions into $k = O(\log(p))$ clusters, each of which has $t = \frac{n}{k}$ rows.
 Among $k$ clustering representive vectors, let $A_{c}$ be the one maximizing the update $\norm[x_{k+1} - x_k]_2^2$. 
 Suppose the rows in the $c$th cluster have bigger updates than rows in other clusters.
 In RKA-JL, it set $d = O(\log(p))$ for Gaussian matrix $\Phi \in \SetR^{d\times p}$.
 Then the utility of the RKA-JL algorithm comparing $kp$ rows to find a maximized one in $O((\log(p))^2 p)$ time
 is the same of the utility of  RKA-Cluster-JL algorithm comparing $k+p$ rows in $O(\log(p) p)$ time.
\end{restatable}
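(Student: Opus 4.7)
The plan is to decompose the claim into two separate verifications: a runtime accounting for each of the two procedures, and a utility (probability-of-picking-the-max) comparison that shows the savings come for free. I would handle them in that order because the runtime part is essentially bookkeeping, whereas the utility part is where the assumptions on clustering and on $d=O(\log p)$ actually do work.

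For the runtime analysis, I would walk through each step of both algorithms under the stated parameter settings. In RKA-JL with $kp$ sampled rows, the dominant cost is evaluating $\gamma_i=|b_i-\langle\alpha_i,\Phi x_k\rangle|/\|\alpha_i\|_2$ for each of the $kp$ sampled rows in the JL-reduced space $\mathbb{R}^d$; each evaluation is $O(d)=O(\log p)$, plus the one-time cost $O(dp)=O(p\log p)$ for forming $\Phi x_k$, giving total $O(kpd)=O(p(\log p)^2)$. In RKA-Cluster-JL comparing $k+p$ rows, the centroid stage computes $r_l$ for $k=O(\log p)$ centroids in ambient $\mathbb{R}^p$, contributing $O(kp)=O(p\log p)$; the subsequent per-row stage evaluates $\gamma_i$ in the JL space for only $p$ rows at $O(d)=O(\log p)$ each, another $O(p\log p)$; adding $\Phi x_k$ once more gives total $O(p\log p)$, a factor of $\log p$ better. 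I would present these two calculations in parallel so the source of the saving, replacing $kp$ JL-evaluations by $k$ ambient-space centroid evaluations plus $p$ JL-evaluations, is visually obvious.

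For the utility analysis, the key observation is that the proposition's hypothesis concentrates all top-update rows into cluster $c$, so only samples that land in cluster $c$ can possibly carry the true maximizer. In RKA-Cluster-JL, the centroid test deterministically identifies cluster $c$ (by the assumption that $A_c$ attains the largest centroid update), after which $p$ rows are drawn from the $t=n/k$ rows of that cluster and fed to the JL max-finder. In RKA-JL, the $kp$ samples are drawn essentially uniformly (rows are normalized, so sampling by $\|A_i\|_2^2/\|A\|_F^2$ reduces to uniform), so by a routine Chernoff bound the count of samples landing in cluster $c$ concentrates around its mean $kp\cdot t/n = p$. Thus both procedures hand the JL max-finder roughly the same number of candidates from the only cluster that matters, and since both use the same dimension $d=O(\log p)$, the Johnson--Lindenstrauss lemma preserves the relative ordering of these $p$ approximated updates with the same high probability in both cases. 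Matching probabilities of selecting the true maximizer then follow.

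The main obstacle I expect is making the utility step quantitatively tight, rather than just heuristic. Two subtleties need care: (i) the Chernoff concentration for the number of RKA-JL samples falling in cluster $c$ must be strong enough that with high probability at least a constant fraction of $p$ candidates land there, which is comfortable because $kp=\Theta(p\log p)$ grows, but the precise failure probability needs to be reconciled with the JL failure probability; and (ii) the JL guarantee that pairwise magnitudes $\gamma_i$ are preserved across $p$ candidates requires $d=\Omega(\log p)$, which is exactly why the hypothesis sets $d=O(\log p)$. Once these are pinned down, matching the two utilities reduces to observing that the JL-based selection is applied to statistically indistinguishable candidate sets, and the runtime comparison I described above completes the proof.
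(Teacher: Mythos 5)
Your proposal follows essentially the same route as the paper's own proof: both reduce the utility claim to the observation that uniform sampling of $kp$ normalized rows places about $p$ of them in cluster $c$ (the paper argues only in expectation, where you add a Chernoff refinement), and both obtain the runtimes $O(kpd)=O(p(\log p)^2)$ versus $O(kp+pd)=O(p\log p)$ by the same bookkeeping. Your version is somewhat more careful about concentration and about the JL ordering guarantee, but the decomposition and key ideas are identical.
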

\begin{proof}
 See appendix \ref{app:cluster-accelerate-1} for details.
\end{proof}

Since the updates are only determined by the directions of the rows,
roughly speaking, the rows with similar directions in the same cluster will have similar updates.
Therefore the rows in cluster $c$ tends to have bigger updates than the rows in other cluster.
This is essentially the key idea behind this algorithm.

\subsection{RKA-Cluster-Block}
In this subsection, we will apply clustering method to block Kaczmarz.
We will show that by using the clustering information we can easily construct well-conditioned matrix blocks doing favor in the convergence analysis of block Kaczmarz.

\begin{restatable}{lem}{lemneedel}\cite{needell2014paved}.
\label{lem:Needell-convergence}
 Suppose $A$ is a matrix with full column rank that admits an $(m, \alpha, \beta)$ row  paving $T$.
 Consider the least-squares problem 
	\[ \min \norm[Ax - b]_2^2 \]
 Let $x_*$ be the unique minimizer, and define $e := Ax_* - b$.
 Then for randomized block Kaczmarz method, one has 
 \begin{equation*}
 \Expect[ \norm[ x_j - x_*]_2^2 ] \leq \left[ 1 - \frac{\sigma^2_{\min}(A)}{\beta m} \right] \norm[x_0 - x_*]_2^2  + \frac{\beta}{\alpha} \frac{\norm[e]_2^2}{\sigma^2_{\min}(A)} 
 \end{equation*}
 where an $(m,\alpha,\beta)$ row paving is a partition $T=\{\tau_1,...,\tau_m\}$ of row indices that satisfices
 \[ \alpha \leq \lambda_{\min}(A_{\tau_i} A_{\tau_i}^T) \quad \text{and} \quad \lambda_{\max}(A_{\tau_i} A_{\tau_i}^T) \leq \beta \]
 for each $\tau_i \in T$.
\end{restatable}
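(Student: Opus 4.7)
The plan is to reproduce the standard one-step contraction argument for randomized block Kaczmarz and then iterate it to get the stated geometric-plus-noise bound. First I would rewrite the update as
\[
x_{j+1} - x_* \;=\; (I - A_\tau^\dagger A_\tau)(x_j - x_*) \;+\; A_\tau^\dagger e_\tau,
\]
where $\tau \in T$ is the block sampled uniformly at iteration $j$ and $e_\tau$ is the restriction of $e = Ax_* - b$ to the rows indexed by $\tau$. The key structural observation is that $A_\tau^\dagger A_\tau$ is the orthogonal projector onto the row space of $A_\tau$, while $A_\tau^\dagger e_\tau$ already lies in that row space; hence the two contributions on the right are orthogonal and the Pythagorean identity gives
\[
\|x_{j+1} - x_*\|_2^2 = \|(I - A_\tau^\dagger A_\tau)(x_j - x_*)\|_2^2 + \|A_\tau^\dagger e_\tau\|_2^2 .
\]

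Next I would bound each term using the paving parameters. For the first term, $\|(I-A_\tau^\dagger A_\tau)(x_j - x_*)\|_2^2 = \|x_j - x_*\|_2^2 - \|A_\tau^\dagger A_\tau(x_j - x_*)\|_2^2$, and writing $A_\tau^\dagger = A_\tau^T(A_\tau A_\tau^T)^{-1}$ together with the upper paving bound $\lambda_{\max}(A_\tau A_\tau^T) \le \beta$ yields
\[
\|A_\tau^\dagger A_\tau(x_j - x_*)\|_2^2 \;\ge\; \tfrac{1}{\beta}\,\|A_\tau(x_j - x_*)\|_2^2 .
\]
For the second term, the lower paving bound $\lambda_{\min}(A_\tau A_\tau^T) \ge \alpha$ gives $\|A_\tau^\dagger e_\tau\|_2^2 \le \|e_\tau\|_2^2/\alpha$.

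Now I would take conditional expectation over the uniform choice of $\tau \in T$ of cardinality $m$. Because $T$ partitions the rows, $\sum_{\tau \in T}\|A_\tau(x_j - x_*)\|_2^2 = \|A(x_j - x_*)\|_2^2 \ge \sigma_{\min}^2(A)\,\|x_j - x_*\|_2^2$ (using full column rank), and $\sum_{\tau \in T}\|e_\tau\|_2^2 = \|e\|_2^2$. Combining everything gives the one-step recursion
\[
\Expect\bigl[\|x_{j+1} - x_*\|_2^2 \,\big|\, x_j\bigr] \;\le\; \Bigl(1 - \tfrac{\sigma_{\min}^2(A)}{\beta m}\Bigr)\|x_j - x_*\|_2^2 + \tfrac{\|e\|_2^2}{\alpha m}.
\]
Iterating, taking total expectation, and summing the geometric tail $\sum_{k\ge 0}(1 - \sigma_{\min}^2(A)/(\beta m))^k \cdot \|e\|_2^2/(\alpha m)$ collapses the noise contribution to $\beta \|e\|_2^2/(\alpha \sigma_{\min}^2(A))$, producing the bound in the lemma.

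The only place that needs real care is the orthogonal decomposition at the top: one must check that $A_\tau^\dagger e_\tau$ indeed lies in the row space of $A_\tau$ (immediate from the definition of the Moore--Penrose pseudoinverse once $A_\tau$ has full row rank, which is guaranteed by $\alpha > 0$) so that the cross term vanishes. Everything else is routine linear algebra, which is why this lemma is quoted verbatim from \cite{needell2014paved} rather than reproved here.
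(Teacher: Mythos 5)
Your proposal is correct and follows essentially the same route as the paper's own proof: the orthogonal decomposition of $x_{j+1}-x_*$ into the projector part and the $A_\tau^\dagger e_\tau$ part, the Pythagorean identity, the paving bounds $\beta$ and $\alpha$ on the two terms, the expectation over the uniform block choice using the partition structure, and the iteration with the geometric series collapsing to $\beta\norm[e]_2^2/(\alpha\,\sigma_{\min}^2(A))$. No substantive differences to report.
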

\begin{proof}
See appendix \ref{app:Needell-convergence} for details.
\end{proof}

From the lemma above, we notice that the convergence rate of block Kaczmarz algorithm highly depends on the spectral norm $\beta$ and condition number $\beta/\alpha$ of the block matrix.
The smaller value of $\beta$ and $\beta/\alpha$ will give us a faster convergence rate.
See Algorithm \ref{alg:RKA-Cluster-Block} for our entire proposed algorithm.

\begin{algorithm}
 \caption{RKA-Cluster-Block}
 \label{alg:RKA-Cluster-Block}
 \begin{algorithmic}[1]
  \State \textbf{Input:} $A \in \SetR^{n\times p}$, $b \in \SetR^{n}$. 
  \State \textbf{Output:} Solve consistent linear equation system $Ax = b$, where $x \in \SetR^{p}$. 
  \State \textbf{Initialization Step:}
		\begin{description}
		\item[Clustering:] Conduct a clustering algorithm in the rows of $A$, resulting in $c$ clusters with representive vectors $A_{\mathbf{c}_l}$, $l=\{1,2,...,c\}$
		\item[Partition:] Randomly extract one row of each cluster and compose to a row submatrix $A_{\tau i}$ from $A$, where $i \in \{1,2,...,T\}$. 
									 And denote the corresponding values as $b_{\tau i}$.
		\item[Setting:] $k = 0$, $x_k = 0$, $N$ the number of iteration.
		\end{description}
	\State \textbf{Selection Step:} Uniformly Select $A_{\tau i}$ from $\{ A_{\tau_1}, A_{\tau_2}, ..., A_{\tau_T} \}$.
	\State \textbf{Update Step:} Set 
					\[ x_{k+1} = x_k + (A_\tau)^\dagger(b_\tau - A_\tau x_k) \]
	\State Set $k = k+1$. Go to step 4 until convergence.
 \end{algorithmic}
\end{algorithm}

Next, we will theoretically show that our algorithm have a better convergence rate under a mild assumption that data are sampled according to high-dimensional Gaussian distribution.
The runtime analysis is similar to RKA-Cluster-JL.

To measure the orthogonality between matrix rows, we define the orthogonality value.
\begin{definition}[Orthogonality Value]
$A$ is an $k \times p$ matrix. 
Let $\hat{A}$ be the matrix after normalizing rows of $A$.
Then each row of $\hat{A}$ has unit length.
Define \textit{Orthogonality Value} $$ov(A) = \max_{i\neq j}| \langle \hat{A}_i, \hat{A}_j \rangle |$$  where $\hat{A}_i$ is the $i$th row of $\hat{A}$.
\end{definition}
Clearly, the inequality $0 \leq ov(A) \leq 1$ holds for any matrix $A$.
Take some examples to get a close look,
$ov(I) = 0$, $ov(\text{ones}\footnote{Matlab notation.}(5,5))
= ov(\text{ones}(5,5) / \sqrt{5}) = 1$.
Then we could give a upper bound on spectral norm below.

\begin{restatable}{thm}{thmupper}
\label{thm:max-singular-value-upper-bound}
 $A \in \SetR^{k \times p}$, $A_i \in\SetR^{p}$ is the $i$th row of $A$, $\norm[A_i]_2 = 1$ for all $i \in \{1,2,...,k\}$.
 Suppose the orthogonality value $ov(A) \leq \epsilon$,
 then one has 
 \[ \norm[AA^T]_2 \leq 1 + k \epsilon. \]   
\end{restatable}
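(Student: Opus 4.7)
The plan is to exploit the structure of the Gram matrix $AA^T$ directly. Since each row of $A$ has unit length, every diagonal entry of $AA^T$ equals $1$, and the assumption $ov(A)\le\epsilon$ means every off-diagonal entry is bounded in absolute value by $\epsilon$. So I would write $AA^T = I + E$ where $E$ is a symmetric $k\times k$ matrix with zero diagonal and $|E_{ij}|\le\epsilon$ for $i\ne j$. The task then reduces to showing $\|E\|_2 \le k\epsilon$ (in fact $(k-1)\epsilon$), after which the triangle inequality $\|AA^T\|_2 \le \|I\|_2 + \|E\|_2$ finishes the bound.

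For the bound on $\|E\|_2$, I would use the variational characterization $\|E\|_2 = \max_{\|x\|_2 = 1}|x^T E x|$ since $E$ is symmetric. Expanding,
\[
 |x^T E x| = \Bigl|\sum_{i\ne j} x_i x_j\, \langle A_i, A_j\rangle\Bigr| \le \epsilon \sum_{i\ne j}|x_i||x_j| = \epsilon\Bigl(\bigl(\textstyle\sum_i |x_i|\bigr)^2 - \sum_i x_i^2\Bigr).
\]
Then I would apply Cauchy--Schwarz in the form $(\sum_i |x_i|)^2 \le k\sum_i x_i^2 = k$ together with $\sum_i x_i^2 = 1$, yielding $|x^T E x|\le (k-1)\epsilon$. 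Combining with $x^T I x = 1$ produces $\|AA^T\|_2 \le 1 + (k-1)\epsilon \le 1 + k\epsilon$.

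There is no real obstacle here: the argument is elementary and follows the standard Gershgorin-style intuition (one could alternatively bound $\|E\|_2$ by $\|E\|_\infty \le (k-1)\epsilon$ since $E$ is symmetric, or apply Gershgorin's disc theorem directly). The only mild subtlety is keeping track of whether one gets $(k-1)\epsilon$ or $k\epsilon$; since the statement asks for the looser bound $1 + k\epsilon$, either route suffices, and I would present the quadratic-form derivation because it is the cleanest and makes the role of the unit-norm rows transparent.
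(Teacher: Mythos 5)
Your argument is correct, and it takes a genuinely different route from the paper. The paper's proof sets $B = AA^T$ and invokes the interpolation-type inequality $\norm[B]_2^2 \leq \norm[B]_1 \norm[B]_\infty$ (quoted as a corollary of a lemma from Byrne), then bounds the induced $1$-norm and $\infty$-norm of $B$ by $1+k\epsilon$ via row/column sums, which yields $\norm[B]_2 \leq 1+k\epsilon$. You instead work with the Rayleigh quotient of the symmetric matrix $E = AA^T - I$ directly, using $\bigl(\sum_i |x_i|\bigr)^2 \leq k\sum_i x_i^2$ to get $\norm[E]_2 \leq (k-1)\epsilon$ and hence the slightly sharper bound $\norm[AA^T]_2 \leq 1+(k-1)\epsilon$. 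Your route is self-contained (no external norm inequality needed, only the variational characterization of the spectral norm for symmetric matrices plus Cauchy--Schwarz), and it delivers a marginally tighter constant; the paper's route is essentially the Schur test and generalizes more readily to non-symmetric settings, though as written its intermediate steps are looser (it drops absolute values on the off-diagonal entries and states the defining inequality of the induced $1$-norm with the inequality reversed). Either way the stated bound $1+k\epsilon$ follows, so your proof is complete as it stands.
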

\begin{proof}(Sketch)
 Using the fact that $\norm[X]_2^2 \leq \norm[X]_1 \norm[X]_\infty$,
 we bound $\norm[AA^T]_1$ and $\norm[AA^T]_\infty$ respectively.
 See appendix \ref{app:max-singular-value-upper-bound} for details.
\end{proof}

This theorem gives us an upper bound of spectral norm to the matrix with bound on orthogonality value. 
Lower orthogonality value corresponds to that $A$'s rows are almost perpendicular. 
Thus, we can conclude that selecting rows from each cluster to construct block matrices can give us small spectral norm.

\begin{restatable}{thm}{thmlower}
\label{thm:max-singular-value-lower-bound}
 Let $A$ be a $k \times p$ row-normalized matrix. 
 Suppose $| \langle A_i, A_j \rangle | \geq \delta$ for all $i, j \in \{1,2,..., k\}$,
 then one has 
 \[ \norm[AA^T]_2 \geq  1+(k-1)\delta. \]
\end{restatable}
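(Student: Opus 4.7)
The plan is to lower-bound $\norm[AA^T]_2$ via the Rayleigh quotient evaluated at a cleverly chosen unit vector. Since the target bound $1+(k-1)\delta$ weighs all $k(k-1)$ off-diagonal entries of the Gram matrix $AA^T$ equally, the natural test vector is $u = \mathbf{1}/\sqrt{k}\in\SetR^k$, the normalized all-ones vector: it selects every pairwise inner product with the same weight, so the hypothesis can be applied term by term. The variational characterization $\norm[AA^T]_2 = \max_{\norm[v]_2=1} v^T AA^T v$ then gives $\norm[AA^T]_2 \geq u^T AA^T u$ for free, reducing the theorem to estimating a single quadratic form.

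Computing that quadratic form is immediate. Expanding yields
\[
\mathbf{1}^T AA^T \mathbf{1} \;=\; \sum_{i,j=1}^{k} \langle A_i, A_j\rangle \;=\; \sum_{i=1}^{k} \norm[A_i]_2^2 \;+\; \sum_{i\neq j}\langle A_i, A_j\rangle.
\]
Row-normalization contributes exactly $k$ from the diagonal, and applying the hypothesis to each of the $k(k-1)$ off-diagonal terms lower-bounds the second sum by $k(k-1)\delta$. Dividing by $k = \norm[\mathbf{1}]_2^2$ then delivers $\norm[AA^T]_2 \geq 1+(k-1)\delta$, matching the claim in a single line.

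The delicate point --- and essentially the only place where real care is needed --- is the sign. The hypothesis bounds $|\langle A_i, A_j\rangle|$ rather than $\langle A_i, A_j\rangle$ itself, so individual off-diagonal entries could a priori be negative and spoil the term-by-term step in the quadratic-form estimate above. I would handle this by the standard row-flip trick: replace $A$ by $\tilde A = DA$ for a diagonal $\pm 1$ matrix $D$. Because $D$ is orthogonal, $\tilde A \tilde A^T = D(AA^T)D$ is an orthogonal conjugate of $AA^T$ and therefore has the same spectrum, so $\norm[\tilde A \tilde A^T]_2 = \norm[AA^T]_2$. The task then reduces to choosing $D$ so that every pairwise inner product of the rows of $\tilde A$ is nonnegative, hence at least $\delta$; in the clustering regime motivating the theorem, where rows within a block point in broadly similar directions, such a $D$ exists and can be built by fixing a reference row and flipping the sign of every row whose inner product with the reference is negative. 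Once this alignment is in place, the Rayleigh computation applied to $\tilde A$ transfers back to $A$ and yields the stated bound; everything else is a routine one-line calculation.
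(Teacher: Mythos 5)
Your main computation is exactly the paper's: test the quadratic form of $B=AA^T$ against the all-ones vector, pick up $k$ from the diagonal and $k(k-1)\delta$ from the off-diagonal entries, and divide by $k$. (The paper phrases it as $\norm[Bx]_2/\norm[x]_2$ rather than as a Rayleigh quotient, but for a positive semidefinite $B$ these give the same bound.) The genuine problem is the step you yourself flag as delicate: the row-flip trick does not work. Conjugating by a diagonal sign matrix $D$ multiplies the off-diagonal entry $B_{ij}$ by $d_i d_j$, and for $k\geq 3$ there is a parity obstruction: if the three off-diagonal entries among rows $1,2,3$ are all negative, you would need $d_1d_2=d_1d_3=d_2d_3=-1$, which is impossible since the product of these three quantities is $(d_1d_2d_3)^2=1$. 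Aligning every row with a fixed reference row, as you propose, only controls the inner products with that reference, not the remaining pairwise ones.

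Moreover, no fix can exist, because the statement as literally written (with $|\langle A_i,A_j\rangle|\geq\delta$) is false. Take $k=3$, $p=2$ and rows equal to the unit vectors at angles $0^\circ$, $120^\circ$, $240^\circ$; then $|\langle A_i,A_j\rangle| = 1/2$ for all $i\neq j$, so $\delta=1/2$ and the claimed bound is $1+2\delta=2$, yet $AA^T = \tfrac{3}{2}I - \tfrac{1}{2}\mathbf{1}\mathbf{1}^T$ has spectral norm $3/2$. The paper's own proof silently drops the absolute value --- it writes $B_{ij}=\langle A_i,A_j\rangle\geq\delta$ --- so what is actually proved assumes the signed inner products are all at least $\delta$. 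Under that hypothesis your one-line Rayleigh computation is complete and correct; you should state the theorem that way rather than trying to reduce the absolute-value version to it.
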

\begin{proof}
 See appendix \ref{app:max-singular-value-lower-bound} for details.
\end{proof}

While, this theorem gives us an lower bound to the matrix with relatively low orthogonality value. 
This corresponds to the case that if we choose rows from only one or two clusters since these selected rows have almost same direction. 
In such case, we proved that the spectral norms of block matrices are quite large. 
Thus, it is quite reasonable to say that choosing rows from each cluster should converge faster than choosing rows from one or two clusters.

\begin{restatable}{thm}{thmcondition}
\label{thm:condition-number-upper-bound}
 $A \in \SetR^{k \times p}$, $A_i \in\SetR^{p}$ is the $i$th row of $A$, $\norm[A_i]_2 = 1$ for all $i \in \{1,2,...,k\}$.
 Suppose the orthogonality value $ov(A) \geq \frac{1}{\epsilon}$,
 then one can bound the condition number of $AA^T$,
 $$\mbox{cond}(AA^T) \leq \frac{1+k\epsilon}{1-\epsilon}.$$
\end{restatable}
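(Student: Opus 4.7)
The plan is to write the condition number as a ratio of extreme eigenvalues of the Gram matrix $AA^T$ and bound each piece separately. Since the rows of $A$ have unit length, Theorem \ref{thm:max-singular-value-upper-bound} is immediately available and supplies the numerator: under the hypothesis $ov(A)\leq\epsilon$, we have $\lambda_{\max}(AA^T) = \|AA^T\|_2 \leq 1+k\epsilon$. So the only real work is to produce a matching lower bound on $\lambda_{\min}(AA^T)$ of the form $1-\epsilon$.

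To get that lower bound, I would decompose $AA^T = I + E$, which is legitimate because the unit-norm condition forces the diagonal of $AA^T$ to be all ones; consequently $E$ has zero diagonal, and the orthogonality-value hypothesis gives $|E_{ij}|\leq\epsilon$ for every off-diagonal entry. Since $E$ is symmetric, $\lambda_{\min}(AA^T)=1+\lambda_{\min}(E)\geq 1-\|E\|_2$, so it suffices to control $\|E\|_2$. A natural tool here is exactly the norm sandwich $\|E\|_2^2\leq\|E\|_1\|E\|_\infty$ used in the proof of Theorem \ref{thm:max-singular-value-upper-bound}, applied now to the off-diagonal matrix $E$ rather than to $AA^T$ itself. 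Combined with the entrywise bound $|E_{ij}|\leq\epsilon$, this produces an estimate of the form $\|E\|_2\leq c\epsilon$ for some constant $c$ that, morally, should collapse to $1$ once the unit-norm normalization and the structure of $E$ (zero diagonal, symmetry) are fully exploited. Once that bound is in hand, the theorem follows from $\mbox{cond}(AA^T)=\lambda_{\max}(AA^T)/\lambda_{\min}(AA^T)\leq (1+k\epsilon)/(1-\epsilon)$.

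The main obstacle, therefore, is the lower bound on $\lambda_{\min}(AA^T)$: a naive Gershgorin argument on $E$ gives only $\|E\|_2\leq (k-1)\epsilon$, which would force the denominator to degrade to $1-(k-1)\epsilon$ and require the additional assumption $k\epsilon<1$ to be nontrivial. Getting the clean factor $1-\epsilon$ in the denominator will require a sharper argument — most likely writing out $v^T E v$ on a minimizing unit vector $v$ and using the symmetry together with the fact that $v$ cannot simultaneously concentrate on many coordinates while achieving the lower eigenvalue. If that refinement fails, the fallback is to state the bound in the weaker form $(1+k\epsilon)/(1-(k-1)\epsilon)$, which still captures the intended message: a small orthogonality value drives the condition number of the resulting block toward $1$, matching the qualitative discussion following Theorems \ref{thm:max-singular-value-upper-bound} and \ref{thm:max-singular-value-lower-bound}.
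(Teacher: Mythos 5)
Your overall strategy is exactly the paper's: write $\mbox{cond}(AA^T)=\sigma_{\max}(AA^T)/\sigma_{\min}(AA^T)$, take the numerator bound $1+k\epsilon$ from Theorem \ref{thm:max-singular-value-upper-bound}, and then try to establish $\sigma_{\min}(AA^T)\geq 1-\epsilon$ (both you and the paper silently read the hypothesis ``$ov(A)\geq\frac{1}{\epsilon}$'' as the typo it surely is for $ov(A)\leq\epsilon$). The paper supplies the denominator through an auxiliary result, Theorem \ref{thm:min-singular-value-lower-bound}, which invokes a Gershgorin-type lower bound on the smallest singular value (Lemma \ref{lem:diagonal}, from \cite{hong1992lower}): with $B=AA^T$, $B_{ii}=1$, $|B_{ij}|\leq\epsilon$, and $r_i(B)=c_i(B)$ by symmetry, that lemma gives $\sigma_{\min}(B)\geq 1-r_i(B)$ where $r_i(B)=\sum_{j\neq i}|B_{ij}|$ is the deleted absolute row sum. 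The paper then substitutes $r_i(B)\leq\epsilon$, but the sum runs over $k-1$ off-diagonal entries, so the correct substitution is $r_i(B)\leq(k-1)\epsilon$ --- precisely the $(k-1)\epsilon$ you arrive at by applying Gershgorin to $E=AA^T-I$.

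So your suspicion is justified, and you should not spend effort chasing a ``sharper argument'' for the $1-\epsilon$ denominator: none exists. Take $k$ unit rows whose pairwise inner products all equal $-\epsilon$; then $AA^T=(1+\epsilon)I-\epsilon J$ with $J$ the all-ones matrix, which is a legitimate Gram matrix whenever $(k-1)\epsilon\leq 1$, and its smallest eigenvalue is exactly $1-(k-1)\epsilon$, strictly below $1-\epsilon$ for $k\geq 3$. Hence the clean denominator claimed by the theorem is unattainable in general, and your fallback
$$\mbox{cond}(AA^T)\leq\frac{1+k\epsilon}{1-(k-1)\epsilon},\qquad (k-1)\epsilon<1,$$
is the statement that actually follows from this line of argument (and it still carries the intended qualitative message that near-orthogonal blocks are well conditioned). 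In short: your attempt is not missing an idea; it has located a genuine error in the paper's own proof.
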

\begin{proof}
 See appendix \ref{app:condition-number-upper-bound} for details.
\end{proof}

According to lemma \ref{lem:Needell-convergence}, 
spectral norm and condition number have a huge impact to the convergence of block Kaczmarz.
Based on the same assumption about low orthogonality value,
the two theorems \ref{thm:max-singular-value-upper-bound} and \ref{thm:condition-number-upper-bound}  give a theoretical analysis to the upper bounds of spectral norm and condition number.
Also, the experiments show that block Kaczmarz with clustering is more robust in the noisy case.

It is necessary to notice that \cite{needell2014paved} proposed two algorithms to select block matrices. 
One approach is an iterative algorithm repeatedly extracting a well-conditioned row-submatrix from $A$ until the paving is complete.
This approach is based on the column subset selection method proposed by \cite{tropp2009column}.
The another approach is a random algorithm partitioning the rows of $A$ in a random manner.
The iterative algorithm will give a set of well-conditioned blocks, but at a much expense of computation.
The random algorithm is easy to implement and bears an upper bound $\beta \leq 6\log(1+n)$ with high probability $1 - n^{-1}$.
Our construction for the clustering matrix blocks is more similar to the random algorithm in that we also construct the matrix block in a random manner, 
after clustering.

But to get the lower bound of $\alpha$, the random algorithm needs a fast incoherence transform, 
which changes the original problem $\min \norm[Ax - b]^2_2$ into 
$\norm[\tilde{A}x - \tilde{b}]_2^2$, where $\tilde{A} = SA$, $\tilde{b} = S b$ and $S$ is the fast incoherence transforming matrix.
Therefore it brings more noise into the original problem.
Without changing the original form of the least square problem, our clustering method will construct 
well-conditioned matrix block with proven upper bounds.

\section{Experiment}

In this section we empirically evaluate our methods in comparison with RKA-JL and RKA-Block algorithms.
We mainly follow \cite{needell2014paved} to conduct our experiments.
The experiments are run in a PC with WIN7 system, i5-3470 core $3.2$GHz CPU and $8$G RAM.

\begin{figure}[ht]
\centering
\subfigure[Add noise $\mathcal{N}(0, 0.1)$]{
  \includegraphics[width=0.22\textwidth]{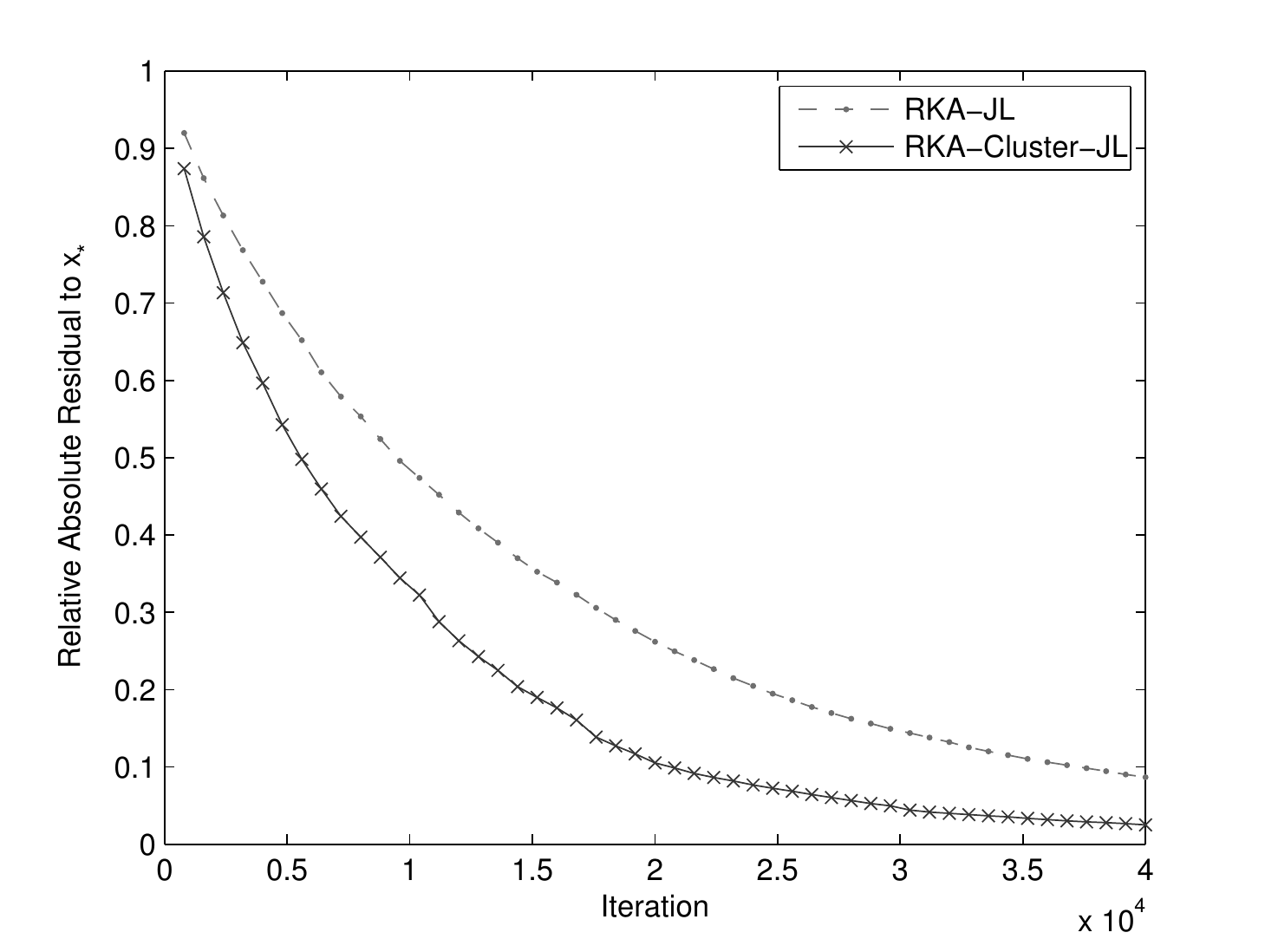}
}
\subfigure[Add noise $\mathcal{N}(0, 0.2)$]{
  \includegraphics[width=0.22\textwidth]{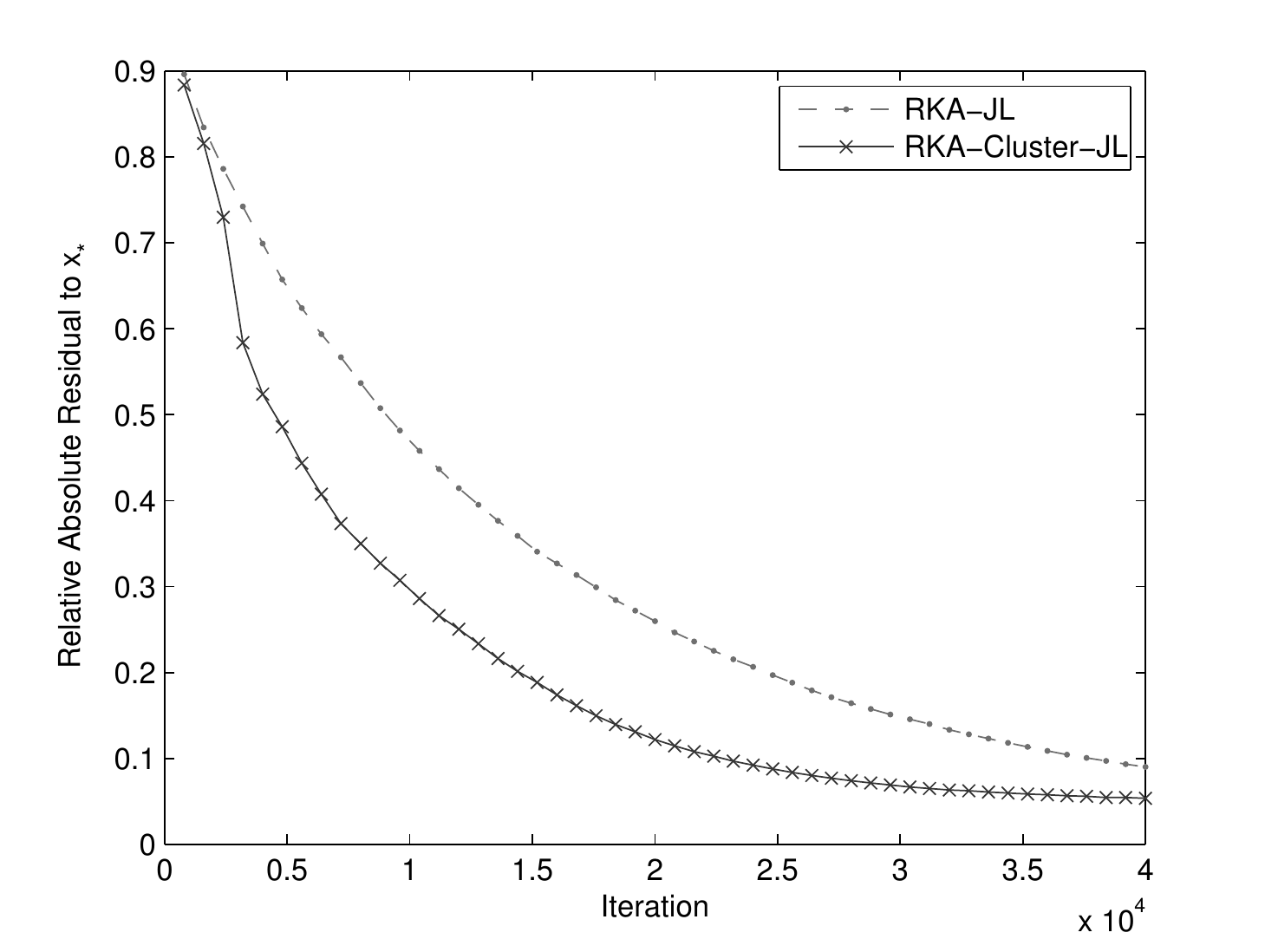}
}
\caption{Convergence comparison between RKA-JL and RKA-Cluster-JL}
\label{fig:jl}
\end{figure}

First, we compare the proposed RKA-Cluster-JL with the original RKA-JL algorithm. 
We generate data that comprises of several clusters. Here, $n=10000$ and $p=1000$. 
Besides, since the real data is usually corrupted by white noise, 
we add Gaussian noise with mean $0$ and standard deviation $0.1$ or $0.2$. 
Figure \ref{fig:jl} below shows that RKA-Cluster-JL outperforms RKA-JL. 
To cluster the data, we use the K-means variant algorithm \cite{king2012online}.

\begin{figure}[ht]
\centering
\subfigure[Add noise $\mathcal{N}(0, 0.1)$]{
  \includegraphics[width=0.22\textwidth]{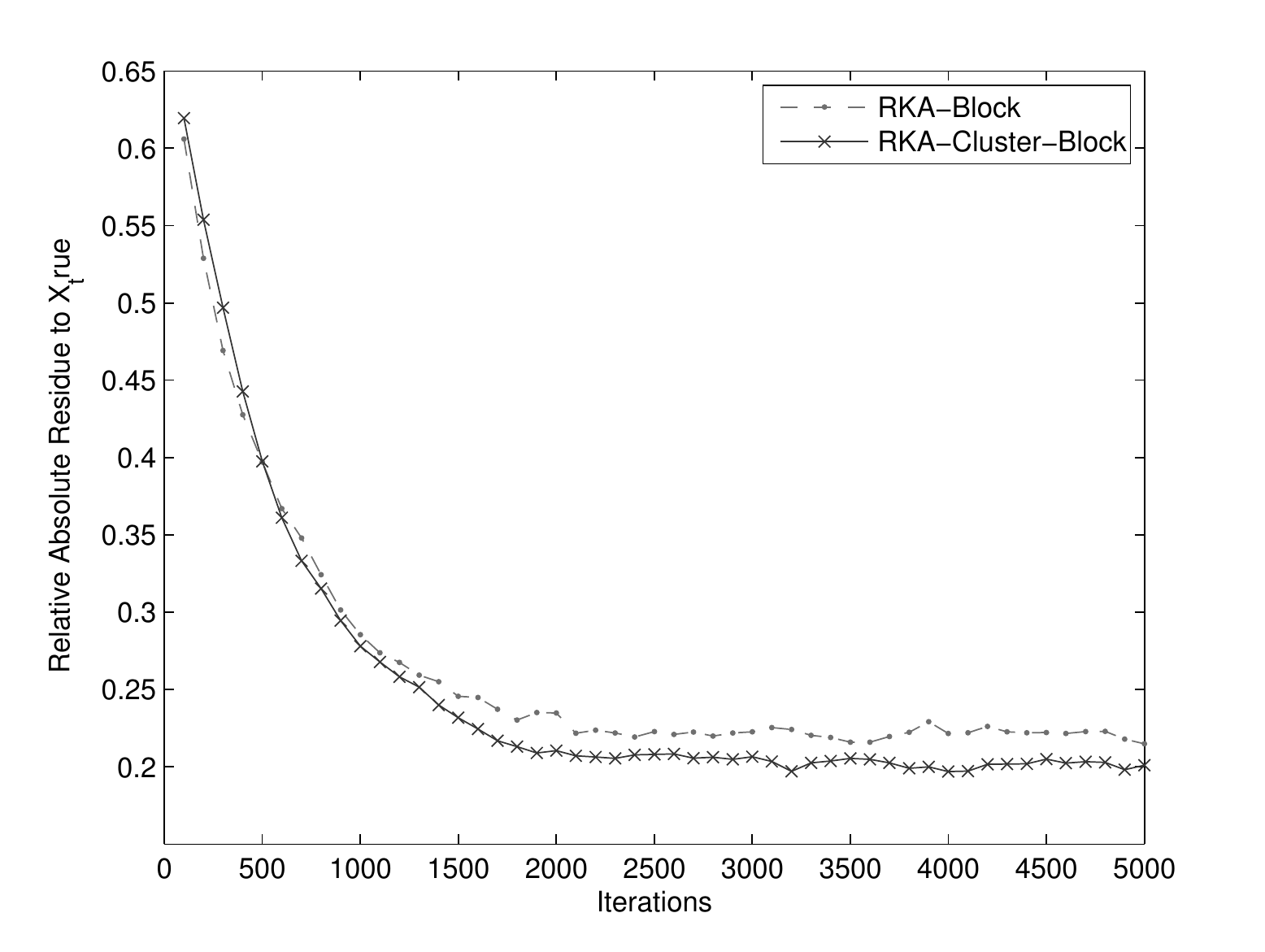}
}
\subfigure[Add noise $\mathcal{N}(0, 0.2)$]{
  \includegraphics[width=0.22\textwidth]{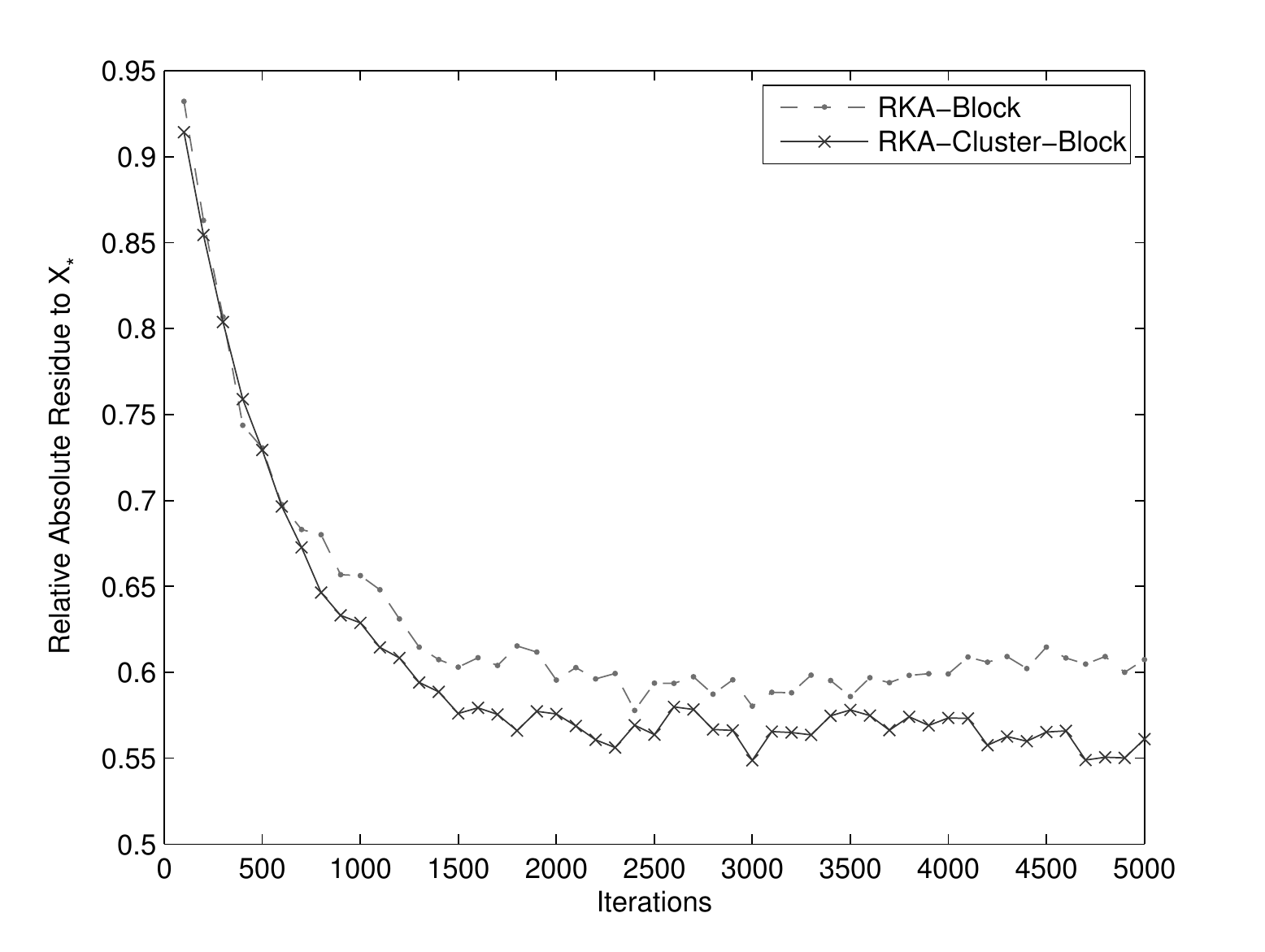}
}
\caption{Convergence comparison between RKA-Block and RKA-Cluster-Block}
\label{fig:block}
\end{figure}

Next, we compare the results produced by RKA-Block and RKA-Cluster-Block algorithms. 
We generate data that lies in four distinctive clusters. Here, $n=10000$, $p=1000$ and the block size is four. 
Then, as usual, white Gaussian noise with mean $0$ and standard deviation $0.1$ or $0.2$ is added to the data to simulate the real world. 
From Figure \ref{fig:block} below, we can see that our algorithm performs better than RKA-Block algorithm.

\begin{figure}[ht]
\centering
\subfigure[$\mathcal{N}(0, 0.1)$]{
  \includegraphics[width=0.16\textwidth]{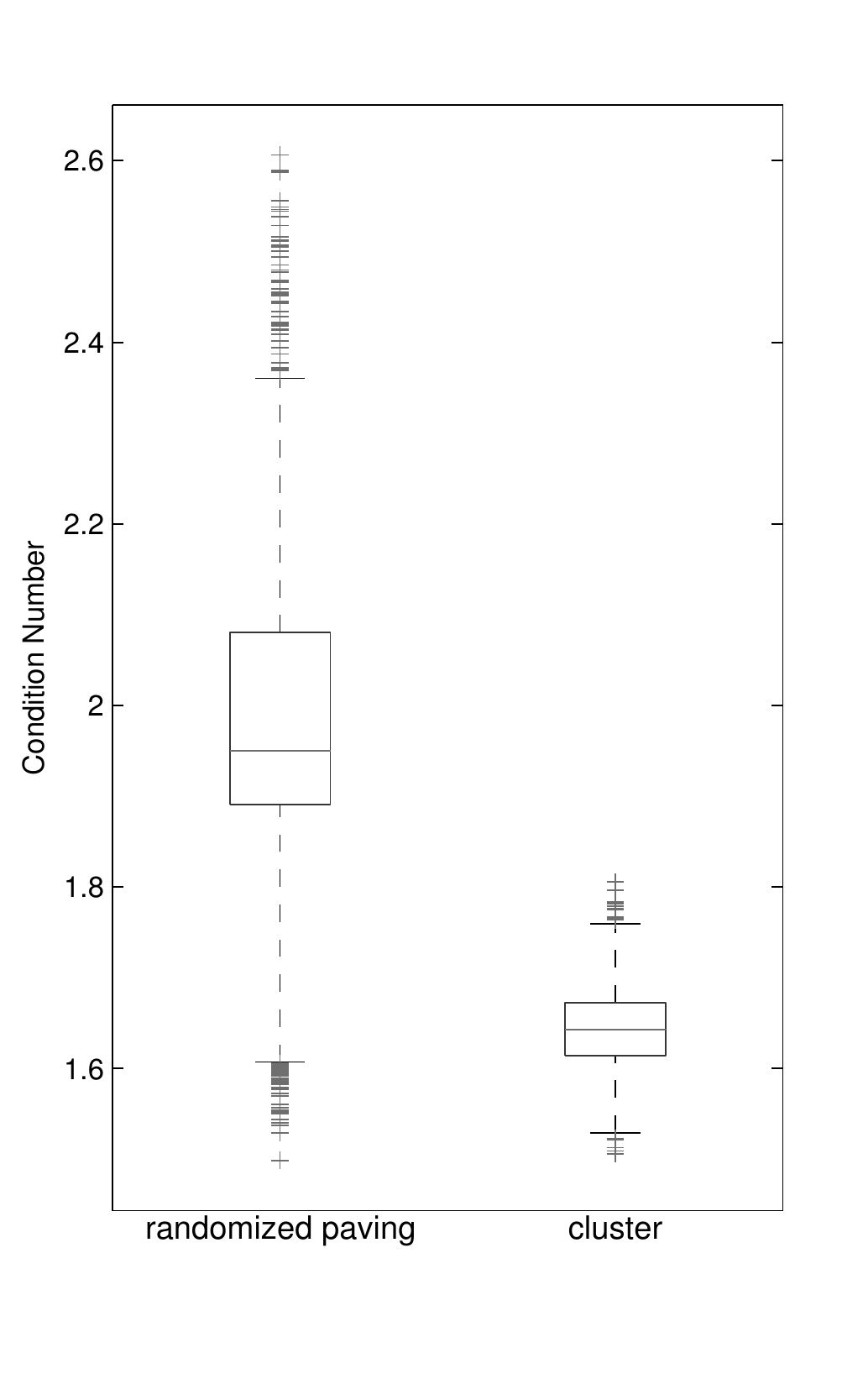}
}
\subfigure[$\mathcal{N}(0, 0.1)$]{
  \includegraphics[width=0.16\textwidth]{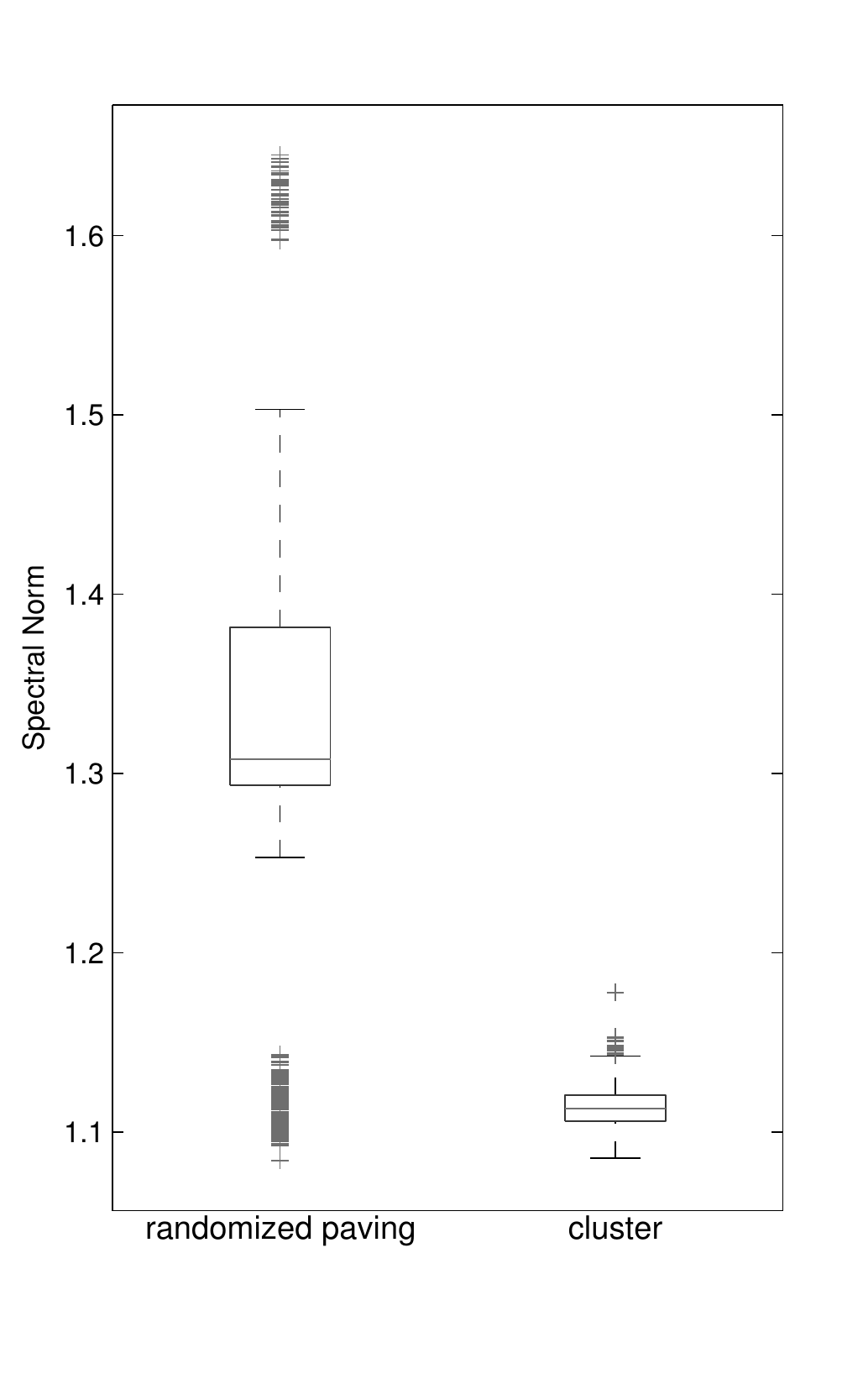}
}
\subfigure[$\mathcal{N}(0, 0.2)$]{
  \includegraphics[width=0.16\textwidth]{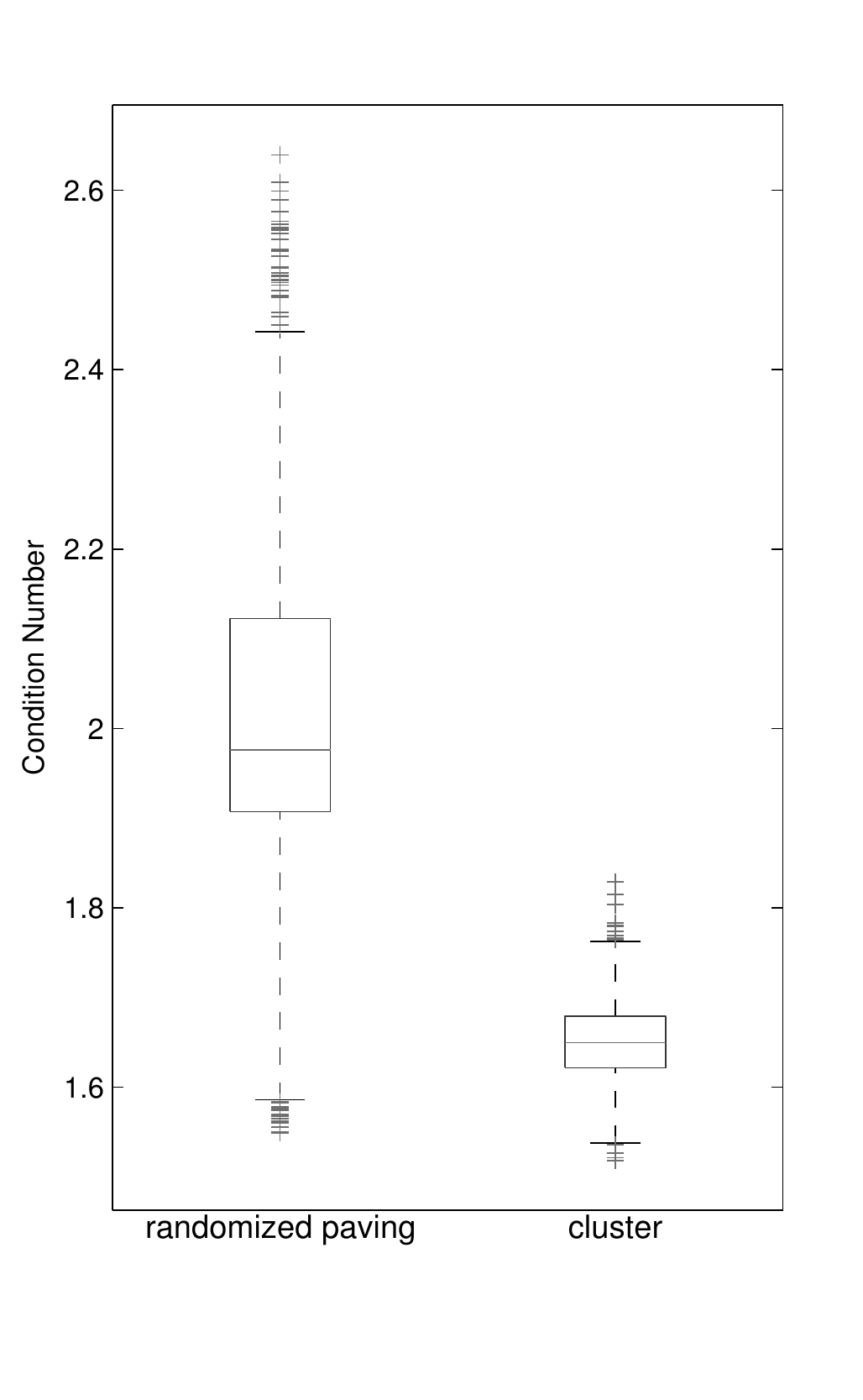}
}
\subfigure[$\mathcal{N}(0, 0.2)$]{
  \includegraphics[width=0.16\textwidth]{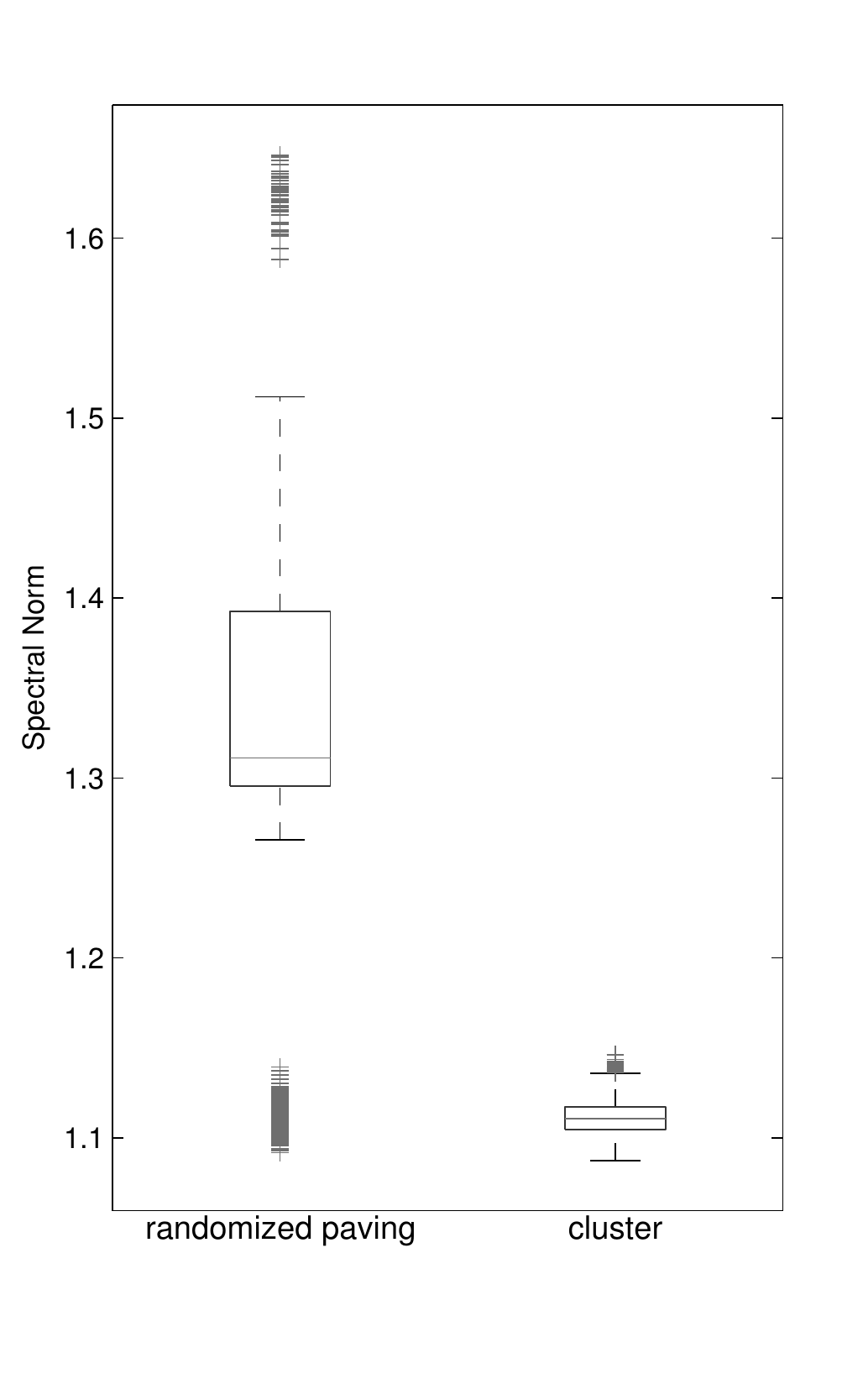}
}
\caption{Comparison between RKA-Block and RKA-Cluster-Block in Condition number and spectral norm}
\label{fig:condition-spectral}
\end{figure}

It is quite reasonable that our algorithm is better. 
Since our theoretical analysis tells us that smaller condition number or spectral norms of the block matrix, 
better performance the algorithm will have. 
We collect the condition numbers and spectral norm when these iterative algorithm running, 
and draw the box plots on Figure \ref{fig:condition-spectral}.
It shows that our algorithm gives both smaller condition number and spectral norm.

There is an implementation detail to mention. When the $k$ clusters are not of the same size, the number of matrix block will be constrained by the minimum size of cluster. In other words, we do not take all rows of $A$ into consideration.  
It is obvious that not considering all data will make our algorithm lose some information.
Thus, to avoid this potential problem when implementing RKA-Cluster-Block algorithm, we use the following procedure to construct matrix blocks. After we construct $p$ blocks, if there are many rows left, we continue using the left $k-1$ clusters to construct matrix block of size $k-1$. This will guarantee that our algorithm take all data into consideration.

\section{Conclusion}

In this paper,  
we propose an acceleration approach to improving the RKA-JL algorithm.
Our approach is based on a simple yet effective idea for clustering data points. 
Moreover, we have extended the clustering idea to construct well-conditioned matrix blocks,
which shows improvement over the block Kaczmarz. When data points follow a Gaussian distribution,
we have conducted theoretical and empirical analysis of our approaches.

\bibliography{reference}
\bibliographystyle{aaai}

\newpage
\clearpage
\appendix

\section{Proof for Theorem \ref{thm:orthogonal-in-high-dimension}}
\label{app:orthogonal-in-high-dimension}

\begin{lem}[Gaussian Annulus Theorem]
\label{lem:annulus}
For a $d$-dimensional spherical Gaussian with unit variance in each direction, 
for any $\beta\le \sqrt{d}$, 
all but at most $3e^{-c\beta^2}$ of the probability mass lies within the annulus $\sqrt{d}-\beta\le |r| \le \sqrt{d}+\beta$, 
where $c$ is a fixed positive constant and $r$ is the Euclidean distance between the point and the origin.
\end{lem}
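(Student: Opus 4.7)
The goal is to prove the Gaussian Annulus Theorem: for a $d$-dimensional spherical Gaussian with unit variance, the norm $r = \|X\|_2$ concentrates around $\sqrt{d}$ with sub-Gaussian tails of the form $3e^{-c\beta^2}$. The plan is to reduce the claim to a $\chi^2$-concentration statement and then apply a sub-exponential Bernstein inequality.

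The starting observation is that if $X = (X_1,\ldots,X_d)$ has i.i.d.\ $\mathcal{N}(0,1)$ coordinates, then $\|X\|_2^2 = \sum_{i=1}^d X_i^2$ has mean exactly $d$, and the centered summands $Y_i := X_i^2 - 1$ are i.i.d.\ zero-mean sub-exponential random variables (their Orlicz $\psi_1$ norm is an absolute constant, and their variance is $2$). Hence by the standard one-sided Bernstein (or Laurent--Massart) inequality, for every $t \ge 0$,
\[
P\bigl(\,|\|X\|_2^2 - d| \ge t\,\bigr) \;\le\; 2\exp\!\left(-c_1 \min\!\left(\frac{t^2}{d},\, t\right)\right)
\]
for an absolute constant $c_1 > 0$. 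In the regime $t \le d$ this reduces to the sub-Gaussian tail $2\exp(-c_1 t^2/d)$, which is exactly what we need.

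Next I would convert a deviation of $\|X\|_2$ from $\sqrt{d}$ into a deviation of $\|X\|_2^2$ from $d$ via the factorization $\|X\|_2^2 - d = (\|X\|_2 - \sqrt d)(\|X\|_2 + \sqrt d)$. Suppose $|\,\|X\|_2 - \sqrt d\,| \ge \beta$ with $\beta \le \sqrt d$. If $\|X\|_2 \ge \sqrt d + \beta$, then $\|X\|_2^2 - d \ge \beta(2\sqrt d + \beta) \ge 2\beta\sqrt d$. If instead $0 \le \|X\|_2 \le \sqrt d - \beta$, then $d - \|X\|_2^2 \ge \beta(2\sqrt d - \beta) \ge \beta\sqrt d$, using $\beta \le \sqrt d$. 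Either way, $|\|X\|_2^2 - d| \ge \beta\sqrt d$. Plugging $t = \beta\sqrt d$ into the Bernstein bound yields
\[
P\bigl(|\,\|X\|_2 - \sqrt d\,| \ge \beta\bigr) \;\le\; 2\exp\!\left(-c_1 \frac{(\beta\sqrt d)^2}{d}\right) = 2\exp(-c_1 \beta^2),
\]
valid for $\beta \le \sqrt d$ (so that $t = \beta\sqrt d \le d$ keeps us in the sub-Gaussian regime). Relabelling $c := c_1$ and loosening the prefactor from $2$ to $3$ (to match the paper's formulation, and to absorb a factor arising if we state the bound without the $\|X\|_2 \ge 0$ caveat) gives the claim.

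The most delicate piece is the algebraic reduction step, since a naive bound $\|X\|_2^2 - d \approx 2\sqrt d (\|X\|_2 - \sqrt d)$ needs to be made rigorous on both sides of $\sqrt d$ and both signs handled without losing the $\beta^2$ rate; the observation above that the $\beta^2$ term helps rather than hurts on the upper side, and the restriction $\beta \le \sqrt d$ controls it on the lower side, is what makes the argument clean. The Bernstein ingredient itself is standard and may be cited. Thus the skeleton is: (i) write $\|X\|_2^2 - d$ as a centered sum of sub-exponential variables; (ii) apply Bernstein; (iii) convert the squared-norm deviation to a norm deviation via the identity above, using $\beta \le \sqrt d$ to stay in the Gaussian tail regime.
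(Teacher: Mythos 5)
The paper does not actually prove this lemma: it is stated as a known result and attributed to \cite{hopcroft2015foundation}, so there is no in-paper argument to compare against. Your proof is correct and is essentially the standard one --- and the same one used in the cited source: reduce to concentration of the centered sum $\sum_{i=1}^d (X_i^2-1)$ of sub-exponential variables via a Bernstein-type tail bound (phrased there as a \emph{master tail bound}), then convert back to the norm via $\|X\|_2^2-d=(\|X\|_2-\sqrt d)(\|X\|_2+\sqrt d)$, with the hypothesis $\beta\le\sqrt d$ keeping $t=\beta\sqrt d\le d$ in the sub-Gaussian regime; your handling of both signs of the deviation in that conversion is the right way to make the step rigorous.
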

This lemma is proposed by \cite{hopcroft2015foundation}

\thmorthogonal*

\begin{proof}

Expectation:
\begin{equation}
\mathbf{\Expect}(u^Tv)=\sum_{i=1}^d \Expect(u_i)\Expect(v_i)=0
\end{equation}

Variance:
\begin{equation}
\begin{aligned}
\Variance(u^Tv)&=\sum_{i=1}^d \Variance(u_iv_i) \\
&=d\cdot \Variance(u_iv_i)
\end{aligned}
\end{equation}

Since
\begin{equation}
\begin{aligned}
\Variance(u_iv_i) &= \mathbf{\Expect}(u_i^2v_i^2)-\mathbf{\Expect}^2(u_iv_i) \\
& = \mathbf{\Expect}(u_i^2)\mathbf{\Expect}(y_i^2) \\
&= \sigma^2\cdot \sigma^2 = \sigma^4
\end{aligned}
\end{equation}
we know that 
\begin{equation}
\Variance(u^Ty) = d\sigma^4
\end{equation}

According to \textbf{Gaussian Annulus Theorem} (Lemma \ref{lem:annulus}), which states that $\forall \beta\le \sqrt{d}$, $\sqrt{d}-\beta\le \norm[u]_2\le \sqrt{d}+\beta$ with at least probability $1-3e^{-c\beta^2}$.

Let $\beta = \frac{\sqrt{d}}{k} = \delta\sqrt{d}$.We know that 
\begin{equation}
\begin{aligned}
P(E_u) & \triangleq P\left(\sigma(1-\delta)\sqrt{d})\le \norm[u]_2\le \sigma(1+\delta)\sqrt{d}\right) \\
              & \ge 1-3e^{-c\delta^2d}
\end{aligned}
\end{equation}
where $E_u$ denotes the event that $u$ holds the above inequality.

For two points $u,v\in \mathbb{R}^d$, using \textbf{Union Bound}, we know that
\begin{equation}
\begin{aligned}
P(E_u \cup E_v) &\le Pr(E_u)+Pr(E_v) \\
&= 2\cdot 3e^{-c\delta^2d}
\end{aligned}
\end{equation}

Thus, we know that
\begin{equation}P\left(\overline{E_u}\cap \overline{E_v}\right) \ge 1-6e^{-c\delta^2d}
\end{equation}

Next, let's compute
\begin{equation}
P\left(\frac{|u^Tv|}{\norm[u]_2\norm[v]_2}\le \epsilon\right)
\end{equation}

\begin{equation}
\begin{aligned}
& P\left(\frac{|u^Tv|}{\norm[u]_2\norm[v]_2} \le \epsilon\right) \\
\le & P\left(\frac{|u^Tv|}{\norm[u]_2\norm[v]_2}\le \epsilon\cap (\overline{E_u}\cap \overline{E_v})\right) \\
=&P\left(\frac{|u^Tv|}{\norm[u]_2\norm[v]_2}\le \epsilon \Big|\overline{E_u}\cap \overline{E_v}\right)\cdot P(\overline{E_u}\cap \overline{E_v})
\end{aligned}
\end{equation}

To compute the first term, we use \textbf{Chebyshev's Ineuqality} as follows.
\begin{equation}
\begin{aligned}
&\ \ \ \ P\left(\frac{|u^Tv|}{\norm[u]_2\norm[v]_2}\ge \epsilon \Big|\overline{E_u}\cap \overline{E_v}\right) \\
&=P\left(|u^Tv|\ge \epsilon\norm[u]_2\norm[v]_2 \Big|\overline{E_u}\cap \overline{E_v}\right) \\
&\le P\left(|u^Tv|\ge \epsilon\sigma(1-\delta)\sqrt{d}\cdot \sigma(1-\delta)\sqrt{d}\right) \\
&=P\left(|u^Tv|\ge \epsilon\sigma^2(1-\delta)^2d\right) \\
&=P\left(|u^Tv-\mathbf{\Expect}(u^Tv)|\ge \epsilon\sigma^2(1-\delta)^2d\right) \\
&\le \frac{\Variance(u^Tv)}{\epsilon^2\sigma^4(1-\delta)^4d^2} 
=\frac{\sigma^4}{\sigma^4\epsilon^2(1-\delta)^4d} =\frac{1}{\epsilon^2(1-\delta)^4d}
\end{aligned}
\end{equation}

Thus, ones has
\begin{equation}
P\left(\frac{|u^Tv|}{\norm[u]_2\norm[v]_2}\le \epsilon \Big|\overline{E_u}\cap \overline{E_v}\right)\ge 1-\frac{1}{\epsilon^2(1-\delta)^4d}
\end{equation}

Thus, 
\begin{equation}
P\left(\frac{|u^Tv|}{\norm[u]_2\norm[v]_2}\le \epsilon\right)\ge \left(1-\frac{1}{\epsilon^2(1-\delta)^4d}\right) \left(1-6e^{-c\delta^2d}\right)
\end{equation}

For example, if we set $\delta=0.5$, we have
\begin{equation}
P\left(\frac{|u^Tv|}{\norm[u]_2\norm[v]_2}\le \epsilon\right)\ge \left(1-\frac{16}{\epsilon^2d}\right) \left(1-6e^{-\frac{cd}{4}}\right)
\end{equation}

\end{proof}

\section{Proof for Proposition \ref{prop:cluster-accelerate-1}}
\label{app:cluster-accelerate-1}
\propJL*

\begin{proof}
 Since the rows in the $c$th cluster have bigger update than rows in other clusters, 
 we should reduce our searching range into the $c$th cluster.
 In RKA-JL, we randomly sample $kp$ rows with probability $\norm[A_i]_2^2 / \norm[A]_F^2 = \frac{1}{n}$.
 Then there will $p$ rows located in the $t$th cluster in expectation.
 In RKA-Cluster-JL, we first choose the optimal cluster, then select $p$ rows in that cluster.
 Via Johnson-Lindenstrauss lemma, the computational expense of RKA-JL will be $O(kp\log(p)) =  O(p(\log(p))^2)$ time.
 And RKA-Cluster-JL will spend $O(kp + p\log(p)) = O(p\log(p))$ time.
\end{proof}

\section{Proof for Lemma \ref{lem:Needell-convergence}} 
We first give the convergence analysis which mainly follows the  analysis in \cite{needell2014paved}.
\label{app:Needell-convergence}

\lemneedel*

\begin{proof}
 According to block Kaczmarz updating rule, one has 
 \begin{equation*}
 \begin{aligned}
  	x_j &= x_{j-1} + A_{\tau}^\dagger (b_\tau - A_{\tau} x_{j-1}) \\
		&= x_{j-1} + A_{\tau}^\dagger (x_* - x_{j-1}) - A_{\tau}^\dagger e_\tau 
  \end{aligned}
 \end{equation*}
 Since $b_\tau = A_{\tau} x_* - e_\tau$, subtract $x_*$, one has 
 \[ 
				x_j - x_* = (I - A_{\tau}^\dagger A_{\tau})(x_{j-1} - x_*) - A_{\tau}^\dagger e_\tau 
 \]
 The range of $A_{\tau}^\dagger$ and the range of $I - A_{\tau}^\dagger A_{\tau}$ are orthogonal, so one has 
 \[ 
				\norm[x_j - x_*]_2^2 = \norm[(I - A_{\tau}^\dagger A_{\tau})(x_{j-1} - x_*)]_2^2 + \norm[A_{\tau}^\dagger e_\tau]_2^2 
	\]
 The second term on the right-hand side satisfies 
 \[ 
				\norm[A_{\tau}^\dagger e_\tau]_2^2 \leq \sigma_{\max}^2 (A_{\tau}^\dagger) \norm[e_\tau]_2^2 \leq \frac{1}{\sigma_{\min}^2(A_{\tau})}\norm[e_\tau]_2^2
	\]
	
 Since $A_{\tau}^\dagger A_{\tau}$ is an projector, it is easy to verify $A_{\tau}^\dagger A_{\tau} = A_{\tau}^\dagger A_{\tau} A_{\tau}^\dagger A_{\tau}$, 
 $(A_{\tau}^\dagger A_{\tau})^T = A_{\tau}^\dagger A_{\tau}$.
 Denote $u = x_{j-1} - x_*$, then one has
 \begin{align*}
  \norm[(I-A_{\tau}^\dagger A_{\tau})u]_2^2 	&= \norm[u]_2^2 + \norm[A_{\tau}^\dagger A_{\tau} u]_2^2 - 2 u^T A_{\tau}^\dagger A_{\tau} \\
																				&= \norm[u]_2^2 - \norm[A_{\tau}^\dagger A_{\tau} u]_2^2
 \end{align*}
 For the second term in the right-hand side, one has 
 \begin{align*}
  \norm[A_{\tau}^\dagger A_{\tau} u]_2^2 	&\geq \sigma_{\min}^2 (A_{\tau}^\dagger) \norm[A_{\tau} u]_2^2 	\\
																		&= \frac{\norm[A_{\tau} u]_2^2}{\sigma_{\max}^2 (A_{\tau})} 		
 \end{align*}
 Take expectation on both sides, one has 
 \begin{align*}
		\Expect[ \norm[A_{\tau}^\dagger A_{\tau} u]_2^2 ] 	&\geq \frac{1}{\sigma^2_{\max}} \Expect[ \norm[A_{\tau} u]_2^2 ] \\
																									&= \frac{1}{\sigma^2_{\max}} \frac{1}{m} \sum_\tau \norm[A_{\tau} u]_2^2 \\
																									&= \frac{1}{\sigma^2_{\max}} \frac{1}{m} \norm[A u]_2^2 \\
																									&\geq \frac{\sigma^2_{\min}(A)}{m \sigma^2_{\max}} \norm[u]_2^2 \\
 \end{align*}
 
 So,
 \[ \Expect[ \norm[x_j - x_*]_2^2 ] \leq (1 - \frac{\sigma_{\min}^2(A)}{m \beta} ) \norm[x_{j-1} - x_*]_2^2 + \frac{1}{m\alpha} \norm[e]_2^2 \]
 Then it is easy to extend this inequality to 
 \begin{equation*}
 \Expect[ \norm[x_j - x_*]_2^2 ] \leq  \left(1 - \frac{\sigma_{\min}^2(A)}{m \beta} \right) \norm[x_{0} - x_*]_2^2   + \frac{\beta}{\alpha\sigma^2_{\min}(A)} \norm[e]_2^2 
 \end{equation*}
\end{proof}

\section{Proof for Theorem \ref{thm:max-singular-value-upper-bound}}
\label{app:max-singular-value-upper-bound}

First we introduce a lemma and corollary proposed in \cite{byrne2009bounds}.
\begin{lem}
 Let $A$ be a $k \times p$ matrix.
 Then for any $a \in [0,2]$, no eigenvalue of the matrix $A^\dagger A$ exceeds the maximum of 
 \[ \sum_{j=1}^p c_{aj} |A_{ij}|^{2-a} \]
 over all $i$, where $c_{aj} = \sum_{i=1}^k |A_{ij}|^a$.
\end{lem}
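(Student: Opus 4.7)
The plan is to bound $\lambda_{\max}(A^\dagger A)$ via its variational characterization and then produce the claimed row-indexed expression through a carefully chosen Cauchy--Schwarz split controlled by the parameter $a$. Since $A^\dagger A$ is positive semidefinite, every eigenvalue lies in $[0,\lambda_{\max}(A^\dagger A)]$, so it suffices to bound the largest one. Using the identity $\lambda_{\max}(A^\dagger A)=\lambda_{\max}(AA^\dagger)=\sup_{y\neq 0}\norm[A^\dagger y]_2^2/\norm[y]_2^2$, my goal reduces to showing $\norm[A^\dagger y]_2^2\le C\norm[y]_2^2$ with $C=\max_i\sum_j c_{aj}|A_{ij}|^{2-a}$.

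First I would expand $\norm[A^\dagger y]_2^2=\sum_j|\sum_i\overline{A_{ij}}\,y_i|^2$ and pass to absolute values through $|\sum_i\overline{A_{ij}}y_i|^2\le(\sum_i|A_{ij}||y_i|)^2$. The key step, and the only place the parameter $a$ enters, is to apply Cauchy--Schwarz to the inner sum under the asymmetric factorization $|A_{ij}||y_i|=|A_{ij}|^{a/2}\cdot\bigl(|A_{ij}|^{1-a/2}|y_i|\bigr)$. Both exponents are nonnegative precisely because $a\in[0,2]$, so the resulting bound $\bigl(\sum_i|A_{ij}|^a\bigr)\bigl(\sum_i|A_{ij}|^{2-a}|y_i|^2\bigr)=c_{aj}\sum_i|A_{ij}|^{2-a}|y_i|^2$ is legitimate for every column $j$.

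Summing over $j$ and interchanging the order of summation yields $\norm[A^\dagger y]_2^2\le\sum_i|y_i|^2\sum_j c_{aj}|A_{ij}|^{2-a}\le\bigl(\max_i\sum_j c_{aj}|A_{ij}|^{2-a}\bigr)\norm[y]_2^2$, and taking the supremum over $y$ completes the argument. Everything outside the asymmetric Cauchy--Schwarz split is routine bookkeeping: opening the norm, reorganizing a double sum, and taking a coordinate-wise maximum.

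The main obstacle, in the sense of being nonobvious, is spotting the correct asymmetric split $|A_{ij}|^{a/2}\cdot|A_{ij}|^{1-a/2}|y_i|$; a naive symmetric split would produce row sums $\sum_j|A_{ij}|^a$ rather than the column sums $c_{aj}$ demanded by the statement. A minor technical wrinkle is the boundary case $|A_{ij}|=0$ when $a\in\{0,2\}$, which I would handle by adopting the convention $0^0=0$ or by restricting each inner sum to the support of the corresponding column; neither interpretation affects the final inequality.
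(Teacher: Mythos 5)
Your proof is correct. Note that the paper itself gives no proof of this lemma: it is quoted from \cite{byrne2009bounds} and used as a black box to derive Corollary \ref{cor:l2-norm-upper-bound}, so there is nothing in the paper to compare against line by line. Your argument --- pass to $\sup_{y\neq 0}\norm[A^\dagger y]_2^2/\norm[y]_2^2$, expand the norm, and apply Cauchy--Schwarz to each column sum under the asymmetric split $|A_{ij}||y_i|=|A_{ij}|^{a/2}\cdot|A_{ij}|^{1-a/2}|y_i|$ --- is essentially the standard proof given in Byrne's paper, and the bookkeeping (interchanging the sums over $i$ and $j$, then taking $\max_i$) comes out exactly to the stated quantity $\max_i\sum_j c_{aj}|A_{ij}|^{2-a}$. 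One interpretive point you got right and that deserves to be made explicit: in this lemma $A^\dagger$ must be read as the conjugate transpose $A^*$, not the Moore--Penrose pseudoinverse that the dagger denotes everywhere else in the paper. With the pseudoinverse reading, $A^\dagger A$ would be an orthogonal projector with eigenvalues in $\{0,1\}$ and the lemma would be vacuous, nor would it yield the corollary $\norm[A]_2^2\leq\norm[A]_1\norm[A]_\infty$; your computation $\norm[A^\dagger y]_2^2=\sum_j|\sum_i\overline{A_{ij}}y_i|^2$ shows you adopted the correct reading. Your handling of the boundary cases $a\in\{0,2\}$ via the convention $0^0=0$ is also fine and matches the sparsity interpretation intended in the source.
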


\begin{cor}
\label{cor:l2-norm-upper-bound}
 Selecting $a = 1$, one has 
 \[ \norm[A]_2^2 \leq \norm[A]_1 \norm[A]_\infty \]
\end{cor}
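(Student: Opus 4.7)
The plan is to instantiate the preceding lemma at $a=1$ and then chase the definitions of the matrix norms through two maxima. Recall that $\norm[A]_2^2$ equals the largest eigenvalue of $A^\dagger A$, so the lemma already hands us the bound
\[ \norm[A]_2^2 \;\leq\; \max_i \sum_{j=1}^p c_{1j}\, |A_{ij}|, \qquad c_{1j} = \sum_{i=1}^k |A_{ij}|, \]
and the task reduces to showing that the right-hand side does not exceed $\norm[A]_1 \norm[A]_\infty$.

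First I would identify $c_{1j}$ as exactly the $\ell_1$ norm of the $j$th column of $A$. Since the induced matrix $1$-norm $\norm[A]_1$ is the maximum column $\ell_1$ sum, one has the uniform bound $c_{1j} \leq \norm[A]_1$ for every $j$. Pulling this factor out of the inner sum gives
\[ \sum_{j=1}^p c_{1j}\, |A_{ij}| \;\leq\; \norm[A]_1 \sum_{j=1}^p |A_{ij}|. \]

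Next I would recognize $\sum_{j=1}^p |A_{ij}|$ as the $\ell_1$ norm of the $i$th row. Taking the maximum over $i$, as prescribed by the lemma, replaces this quantity by the induced matrix $\infty$-norm $\norm[A]_\infty$, yielding
\[ \max_i \sum_{j=1}^p c_{1j}\, |A_{ij}| \;\leq\; \norm[A]_1 \norm[A]_\infty, \]
and combined with the eigenvalue bound above, this proves the corollary.

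There is essentially no hard step here: the entire argument is a bookkeeping exercise that splits the expression from the lemma into a column-sum factor (controlled by $\norm[A]_1$) and a row-sum factor (controlled by $\norm[A]_\infty$) and then takes the appropriate maxima in the correct order. The only mild care needed is to keep the roles of the indices $i$ (row) and $j$ (column) straight so that the column quantity $c_{1j}$ feeds into the $1$-norm and the row quantity $\sum_j |A_{ij}|$ feeds into the $\infty$-norm, rather than the other way around.
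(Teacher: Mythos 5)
Your proof is correct and follows exactly the route the paper intends: the corollary is presented as an immediate instantiation of the preceding lemma at $a=1$, and your argument simply supplies the bookkeeping (identifying $c_{1j}$ with the column sums bounded by $\norm[A]_1$ and $\sum_j |A_{ij}|$ with the row sums bounded by $\norm[A]_\infty$) that the paper leaves implicit. No issues.
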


\thmupper*
\begin{proof}
 Let $B = AA^T$, then the diagonal item $B_{ii} = \langle A_i, A_i \rangle = 1$ and $B_{i,j} = \langle A_i, A_j \rangle \leq \epsilon$.
 According to \ref{cor:l2-norm-upper-bound}, $\norm[B]_2^2 \leq \norm[B]_1 \norm[B]_\infty$. 
 So we condiser $\norm[B]_1$ and $\norm[B]_\infty$ in the following.
 
 $ \norm[B]_1 \leq  \frac{\norm[Bx]_1}{\norm[x]_1} $ for all $x$.
 Denote $B_i$ the $i$th row of $B$, $S_i = \sum_{j\neq i} x_i$.
 Then 
 \[ B_i x \leq x_i + \epsilon S_i \]
 and 
 \[\norm[Bx]_1 \leq \sum_i^k x_i + \epsilon \sum_i^k S_i \leq (1+ k\epsilon) \norm[x]_1 
 \]
 Therefore, \[\norm[B]_1 \leq 1 + k\epsilon.\]
 Thus, we have 
 \begin{equation}
 \begin{aligned}
 \norm[B x]_\infty &= \max_i \norm[B_i x]_1 \leq x_i + \epsilon S_i \\
 &\leq (\max_i x_i) + k\epsilon (\max_i x_i) \\
 &= (1+k\epsilon) \norm[x]_\infty. 
 \end{aligned}
 \end{equation}
 
 Therefore 
 \[ \norm[B]_\infty \leq 1 + k\epsilon. \]
 Thus, we get $\norm[B]_2 \leq \sqrt{\norm[B]_1\norm[B]_\infty} \leq 1+k\epsilon$.
\end{proof}

\section{Proof for Theorem \ref{thm:max-singular-value-lower-bound}}
\label{app:max-singular-value-lower-bound}

\thmlower*

\begin{proof}
 Let $B = AA^T$, then the diagonal item $B_{ii} = \langle A_i, A_i \rangle = 1$ and $B_{i,j} = \langle A_i, A_j \rangle \geq \delta$.
 $\norm[B]_2 \geq \frac{\norm[Bx]_2}{\norm[x]_2}$ for all $x$.
 Let $x = [1;1; ...; 1] \in \SetR^{k\times 1}$, then one has 
 \begin{align*} 
		\frac{\norm[Bx]_2}{\norm[x]_2} &= \frac{1}{\sqrt{k}} \norm[Bx]_2 \\
																	&\geq \frac{1}{\sqrt{k}} \sqrt{k (1+(k-1)\mathbf{\delta})^2}  \\
																	&= 1 + (k-1) \delta
 \end{align*} 
\end{proof}

\section{Proof for Theorem \ref{thm:condition-number-upper-bound}}
\label{app:condition-number-upper-bound}

\begin{lem}[Theorem 0 in \cite{hong1992lower}]
\label{lem:diagonal}
 For a matrix $A \in \SetR^{n\times n}$, one has 
 \begin{equation*}
 \sigma_{\min}(A) \geq \min_{1\leq k \leq n}  \left\{ \frac{1}{2}  \left( \sqrt{ 4 |a_{kk}|^2 + [r_k(A) - c_k(A)]_2^2 } -  [r_k(A) + c_k(A)] \right) \right\}
 \end{equation*}
 where $r_k(A) = \sum_{j\neq k} |a_{kj}|$ and $c_k(A) = \sum_{j\neq k} |a_{jk}|$, $a_{kj}$ is the item in $k$th row and $j$th column in $A$.
\end{lem}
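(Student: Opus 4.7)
The plan is to combine the already-established spectral-norm upper bound (Theorem \ref{thm:max-singular-value-upper-bound}) with a Gershgorin-style lower bound on the smallest singular value of $B := AA^T$ supplied by Lemma \ref{lem:diagonal}. Once I have $\lambda_{\max}(B) \leq 1 + k\epsilon$ and a matching lower bound $\lambda_{\min}(B) \geq 1 - \epsilon$ (interpreting the hypothesis so that the off-diagonal entries of $B$ are small), the condition number bound follows by taking the ratio.

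First I would unpack the structure of $B$. Because the rows of $A$ are unit-norm, every diagonal entry satisfies $B_{kk} = \langle A_k,A_k\rangle = 1$. The orthogonality-value hypothesis (which I read, consistently with the rest of the paper and with the shape of the stated bound, as $ov(A)\le\epsilon$ so that $\epsilon$ is small) gives $|B_{kj}| = |\langle A_k,A_j\rangle| \le \epsilon$ for $k\ne j$. Also $B$ is symmetric, so the row and column off-diagonal sums agree: $r_k(B) = c_k(B)$, and each is bounded by a quantity of order $\epsilon$.

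Next I would invoke Lemma \ref{lem:diagonal} with $n=k$ and $a_{kk}=B_{kk}=1$. Because $r_k(B)=c_k(B)$, the term $\sqrt{4|B_{kk}|^2 + (r_k-c_k)^2}$ collapses to $2$, so the lemma simplifies dramatically to
\[
\sigma_{\min}(B) \;\ge\; \min_{1\le k\le k}\bigl\{1 - r_k(B)\bigr\}.
\]
Bounding $r_k(B)$ then produces the desired lower bound on $\lambda_{\min}(B) = \sigma_{\min}(B)$ (note $B$ is symmetric positive semidefinite, so singular values and eigenvalues coincide). Combining with $\lambda_{\max}(B)\le 1+k\epsilon$ from Theorem \ref{thm:max-singular-value-upper-bound} and forming the ratio $\mathrm{cond}(B)=\lambda_{\max}(B)/\lambda_{\min}(B)$ yields the claimed fraction.

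The main obstacle I anticipate is reconciling the denominator: a direct bound $r_k(B)\le (k-1)\epsilon$ only gives $\lambda_{\min}(B)\ge 1-(k-1)\epsilon$, which is weaker than the $1-\epsilon$ appearing in the stated theorem. To recover the stated denominator I would need either a sharper reading of the hypothesis (for instance, controlling the \emph{sum} rather than the maximum of off-diagonal correlations, so that $r_k(B)\le\epsilon$ directly) or the observation that in the regime the paper cares about the cluster representatives are chosen so that essentially one off-diagonal dominates. Either way, the skeleton of the argument is the two-line combination above: upper-bound $\lambda_{\max}$ by the earlier theorem, lower-bound $\lambda_{\min}$ by the symmetric version of Lemma \ref{lem:diagonal}, and divide.
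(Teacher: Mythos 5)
Your proposal does not actually prove the statement it is supposed to prove. Lemma \ref{lem:diagonal} is the Hong--Pan bound: a lower estimate on $\sigma_{\min}(A)$ valid for an \emph{arbitrary} square matrix $A\in\SetR^{n\times n}$, with no unit-norm rows, no orthogonality value, and no Gram-matrix structure anywhere in its hypotheses. What you outline is instead a derivation of the downstream results (Theorem \ref{thm:min-singular-value-lower-bound} and Theorem \ref{thm:condition-number-upper-bound}): you explicitly take Lemma \ref{lem:diagonal} as ``supplied,'' specialize it to $B=AA^T$ with $B_{kk}=1$ and $r_k(B)=c_k(B)$, and combine the result with Theorem \ref{thm:max-singular-value-upper-bound}. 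As an argument for Lemma \ref{lem:diagonal} itself this is circular --- the lemma cannot be established by invoking it --- so the entire proof of the target statement is missing. A genuine proof would have to reproduce the argument of the cited reference, e.g.\ by passing to the symmetric Jordan--Wielandt matrix $\bigl(\begin{smallmatrix} 0 & A \\ A^T & 0 \end{smallmatrix}\bigr)$, whose eigenvalues are $\pm\sigma_i(A)$, and applying a Gershgorin- or Brauer-type eigenvalue inclusion region to keep its spectrum away from the origin; none of that machinery appears in your outline. (For what it is worth, the paper offers no proof either: it imports the lemma verbatim from \cite{hong1992lower}.)

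Two secondary remarks. First, the specialization you describe is essentially the paper's own proof of Theorem \ref{thm:min-singular-value-lower-bound}, so if the intended target had been the condition-number bound your skeleton would match the paper's route. Second, the discrepancy you flag --- that $r_k(B)\le (k-1)\epsilon$ only yields $\sigma_{\min}(B)\ge 1-(k-1)\epsilon$ rather than the $1-\epsilon$ the paper claims --- is a real issue, but it is an issue with the paper's application of Lemma \ref{lem:diagonal} in Theorem \ref{thm:min-singular-value-lower-bound}, not with Lemma \ref{lem:diagonal} itself, and so it does not bear on the statement you were asked to prove.
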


\begin{thm}
\label{thm:min-singular-value-lower-bound}
 $A \in \SetR^{k \times p}$, $A_i \in\SetR^{p}$ is the $i$th row of $A$, $\norm[A_i]_2 = 1$ for all $i \in \{1,2,...,k\}$.
 Suppose $| \langle A_i, A_j \rangle | \leq \epsilon$ for all $i, j \in \{1,2,..., k\}$, then one has 
 \[ \sigma_{\min}(AA^T) \geq 1-\epsilon  \]
\end{thm}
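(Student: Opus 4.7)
The plan is to apply Lemma~\ref{lem:diagonal} to the symmetric $k \times k$ Gram matrix $B := AA^T$. First I would read off the entries from the hypotheses: $\norm[A_i]_2 = 1$ gives the diagonal $b_{ii} = 1$, while $|\langle A_i, A_j\rangle| \leq \epsilon$ gives $|b_{ij}| \leq \epsilon$ for every $i \neq j$. Because $B$ is symmetric, the deleted row-sum and column-sum quantities that appear in the lemma coincide, so $r_k(B) - c_k(B) = 0$ and each of them is bounded by $(k-1)\epsilon$.

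Next I would substitute these facts into Lemma~\ref{lem:diagonal}. With $r_k = c_k$, the expression under the square root collapses to $4|b_{kk}|^2$, so the formula simplifies to $|b_{kk}| - r_k(B) = 1 - r_k(B)$; minimizing over $k$ while using $r_k(B) \leq (k-1)\epsilon$ yields $\sigma_{\min}(AA^T) \geq 1 - (k-1)\epsilon$. This is the natural output of the cited machinery, and it is essentially the only tool the appendix provides for a lower bound on $\sigma_{\min}$.

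The main obstacle is that this chain stops at $1-(k-1)\epsilon$, whereas the statement asserts the $k$-independent bound $1-\epsilon$, and the gap is not slack in my analysis. Choose unit vectors $A_1,\dots,A_k \in \SetR^p$ (with $p \geq k$) whose pairwise inner products all equal $-\epsilon$, for any $\epsilon < 1/(k-1)$; this is realizable since $B = (1+\epsilon)I - \epsilon\,\mathbf{1}\mathbf{1}^T$ is PSD in that range. The all-ones eigenvector then gives $\sigma_{\min}(B) = 1-(k-1)\epsilon$, which for $k \geq 3$ is strictly smaller than $1-\epsilon$. So the literal conclusion as stated does not follow from the listed hypotheses, and I do not see any route that avoids this counterexample. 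I therefore see only two honest completions: either strengthen the hypothesis to $|\langle A_i,A_j\rangle| \leq \epsilon/(k-1)$, so that the two-step Lemma~\ref{lem:diagonal} argument above gives exactly $\sigma_{\min}(AA^T) \geq 1-\epsilon$; or weaken the conclusion to $\sigma_{\min}(AA^T) \geq 1-(k-1)\epsilon$, which the same argument proves as written. I would flag the discrepancy and present the proof under one of these corrections rather than claim the original bound.
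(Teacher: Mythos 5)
Your derivation follows exactly the route the paper takes: both set $B = AA^T$, observe $B_{ii}=1$ and $|B_{ij}|\le\epsilon$ for $i\ne j$, and invoke Lemma~\ref{lem:diagonal}. The difference is that you track the deleted row and column sums honestly: $r_k(B)=\sum_{j\ne k}|B_{kj}|\le (k-1)\epsilon$, so the lemma yields $\sigma_{\min}(AA^T)\ge 1-(k-1)\epsilon$. The paper's appendix instead writes $\tfrac12\left(\sqrt{4}-2\epsilon\right)$, i.e.\ it silently replaces the sum of the $k-1$ off-diagonal magnitudes by a single $\epsilon$; that step is invalid for $k\ge 3$. Your counterexample $B=(1+\epsilon)I-\epsilon\,\mathbf{1}\mathbf{1}^T$ (unit diagonal, off-diagonal entries $-\epsilon$, smallest eigenvalue $1-(k-1)\epsilon$, realizable as the Gram matrix of unit vectors whenever $\epsilon<1/(k-1)$ and $p\ge k$) shows the stated conclusion $\sigma_{\min}(AA^T)\ge 1-\epsilon$ is actually false as written, not merely unproven. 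So the flaw you flag is real, and either of your repairs is the correct one: strengthen the hypothesis to $|\langle A_i,A_j\rangle|\le\epsilon/(k-1)$, or weaken the conclusion to $1-(k-1)\epsilon$. Note also that the downstream bound $\mathrm{cond}(AA^T)\le(1+k\epsilon)/(1-\epsilon)$ in Theorem~\ref{thm:condition-number-upper-bound} inherits the same correction and should read $(1+k\epsilon)/(1-(k-1)\epsilon)$.
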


\begin{proof}
\label{45}
 Let $B = AA^T$, then one has 
	\[ B_{ii} = \langle A_i, A_i \rangle = 1 \quad \text{and}  |B_{i,j}| = |\langle A_i, A_j \rangle| \leq \epsilon. \]
 Apply \ref{lem:diagonal}, we can get the inequality
 \begin{align*} 
	\sigma_{\min} (B) &\geq \min_{1\leq i \leq k} \left\{ \frac{1}{2} \left( \sqrt{ 4 }  -  2\epsilon \right) \right\}  \\
										&= 1 - \epsilon
 \end{align*}
\end{proof}

\thmcondition*
\begin{proof}
Using Theorem \ref{thm:max-singular-value-upper-bound} and Theorem \ref{thm:min-singular-value-lower-bound}, one can easily prove this theorem.
\end{proof}

\end{document}